\DeclareMathOperator{\trace}{Trace}
\DeclareMathOperator{\diag}{diag}
\newtheorem{example}{Example}
\newtheorem{remark}{Remark}
\newtheorem{proposition}{Proposition}
\colorlet{VeryRed}{red!90!black!80}
\colorlet{VeryGreen}{green!90!black!80}
\title{Finite time horizon mixed control of vibrational systems%
\thanks{
This work has been  supported in parts by Croatian Science Foundation under the projects `Control of Dynamical Systems' (IP-2016-06-2468) and `Vibration Reduction in Mechanical Systems' (IP-2019-04-6774).}
}  
\author{Ivica Naki\'{c}\footnote{Faculty od Science, Department of Mathematics, University of Zagreb, Zagreb, Croatia, (ivica.nakic@math.hr)},
        Marinela Pilj Vidakovi\'{c}\footnote{School of Applied Mathematics and Informatics, Osijek, Croatia, (mpilj@mathos.hr, ztomljan@mathos.hr)},
        Zoran Tomljanovi\'{c}\footnotemark[\value{footnote}] }
\begin{document}

\maketitle

\begin{abstract}
We consider a vibrational system control problem over a finite time horizon. The performance measure of the system is taken to be $p$-mixed $H_2$ norm which generalizes the standard $H_2$ norm. We present an algorithm for efficient calculation of this norm in the case when the system is parameter dependent and the number of inputs or outputs of the system is significantly smaller than the order of the system. Our approach is based on a novel procedure which is not based on solving Lyapunov equations and which takes into account the structure of the system. We use a characterization of the $H_2$ norm given in terms of integrals which we solve using adaptive quadrature rules. This enables us to use recycling strategies as well as parallelization. The efficiency of the new algorithm allows for an analysis of  the influence of various system parameters and  different finite time horizons on the value of the $p$-mixed $H_2$ norm. We illustrate our approach by numerical examples concerning an $n$-mass oscillator with one damper.
\end{abstract}

\textbf{Keywords:} Damping optimization, Second-order systems, Finite time horizon, Vibrational system

\vspace{0.5cm}

\textbf{  AMS subject classiﬁcations: } 93C05, 93C15, 74D99, 70Q05

\begin{section}{Introduction}
\label{Introduction}
The topic of this paper are vibrational systems, a class of systems which models oscillating physical systems. We are interested in those systems where the vibrations are unwanted and where one wants to design a system which reduces or minimizes the effects of a particular type of vibratory disturbance.

More precisely, we deal with a linear vibrational system given by the following matrix algebraic-differential equation:
\begin{equation}\label{MDK}
	G = \left\{
	\begin{aligned}
	& M\ddot{x}+C\dot{x}+Kx=B_2 u, \\
	& x(0) = x_0, \; \dot{x}(0) = \dot{x}_0,\\
	& w = \begin{bmatrix}
	E_1 x \\ E_2 \dot{x}
\end{bmatrix}.
	\end{aligned}
	\right.
\end{equation}
Here the mass matrix $M$ and the stiffness matrix $K$ are
real, symmetric positive definite matrices of order $n\in \mathbb{N}$.
The damping matrix $C$ is real positive definite matrix and it is given as a sum of the internal damping matrix $C_{\mathrm{int}}$ (which is predefined) and the external damping matrix $C_{\mathrm{ext}}$ (which is subject to a design choice), that is
 $C=C_{\mathrm{int}}+C_{\mathrm{ext}}$.  The internal damping $C_{\mathrm{int}}$ is usually taken to
be a small multiple of the so called critical damping, that is, $C_{\mathrm{int}} =
\alpha  C_{\mathrm{crit}}$, for some $\alpha>0$, where  the critical damping
$C_{\mathrm{crit}}$ is (see, e.g., \cite{TRUHVES09,NakTomTr19,VES2011}) given by
\begin{equation}\label{C_u}
  C_{\mathrm{crit}} =2 M^{1/2}\sqrt{M^{-1/2}KM^{-1/2}}M^{1/2}.
\end{equation}
The external damping matrix describes the (passive) dampers of the system. In our case it will depend on positive real parameters $v_i$, $i=1,\ldots,q$, $q\ll n$, (so called viscosities) and matrices corresponding to the positions of the dampers.

The vector function $x\colon [0,\infty)\to \mathbb{R}^n$ contains the state variables and $x_0$, $\dot{x}_0$ are the initial data. System disturbances are denoted by the vector function $u\colon [0,\infty)\to  \mathbb{R}^{m}$ and the matrix  $B_2\in \mathbb{R}^{n\times m }$. The output or the measurement vector function $w$ is determined by the output matrices $E_1, E_2 \in \mathbb{R}^{k\times n}$.

The problem of determining the optimal
damping matrix $C_{\mathrm{ext}}$ which will  ensure  optimal evanescence of
 the state $x$ from \eqref{MDK}
 is well studied. There is a vast literature in this field of research and this optimization problem has been intensively considered in the last two decades, see, e.g., \cite{BennerTomljTruh10, BRAB98, Nakic13, TRUH04, VES2011,
NakTomTr19, TRUHVES09,   Cox:04}. The minimization of vibrations was also intensively studied  in engineering and applied mathematics. Here, we list only a couple of references: \cite{beards1996structural, EIRivin2004, ITakewaki2009, du2016modeling, inman2017vibration,Gaw}.

For damping optimization there exist several  optimization criteria depending on different application areas.
An overview of such criteria can be found, e.g., in  \cite{VES2011}  or \cite{NAKIC02}. From the control theory perspective for the optimization criteria the $H_2$ or $H_\infty$ norms can be used. Within this setting, several authors considered model order reduction approaches in order to determine the optimal damping parameters  efficiently; for more details see  \cite{TomBeatGug18, BennerKuerTomljTruh15,AliBMSV17,BenV13e}.  Some criteria are based on eigenvalues, such as spectral abscissa criterion (for more details, see, e.g., \cite{FreitasLancaster99,NAKTOMTRUH13,WJSH2018}), while other criteria are based on the total energy of the system, such as the total average energy. Total average energy  was considered widely in the last two decades, see,  e.g.,  \cite{VES89,VES90,TRUHVES05,TrTomPuv17,Cox:04,Nakic13}. In \cite{BennerTomljTruh10,BennerTomljTruh11} the  authors   considered dimension reduction techniques that allowed efficient calculation of the total average energy.

In all the aforementioned papers, the time horizon of the system \eqref{MDK} was taken to be infinite. We are interested in the case of the finite time horizon, i.e.\ we study the system \eqref{MDK} in the time interval $[0,T]$, $T<\infty$. The infinite time horizon case is a natural choice in the cases when the vibration occurs over a longer period of time or the system is perpetually disturbed. But in the case of short duration phenomena such as earthquakes, finite time horizon is a much more suitable choice. From the mathematical point of view, the infinite time horizon leads to a computationally simpler optimization criterion, which is a very important aspect when designing the optimal damping structure. Hence if the finite time $T$ is large enough, it makes sense to choose $T=\infty$ even though this might lead to a (usually slightly) suboptimal design. But if the finite time is sufficiently small (but not too small as to render the design problem infeasible) then the choice $T=\infty$ is not viable and so the finite time horizon problem is the one
that merits a closer investigation.

Our choice of the optimization criterion will be based on the $p$--mixed $H_2$ norm, which was investigated in \cite{NakTomTr19} where the authors considered performance measure that takes into account the total average energy, and also the $H_2$ norm of the system. This criterion contains both the $H_2$ norm and the total average energy criteria as the special cases and takes into account both the initial data as well as external disturbances. This criterion will be also used in this paper as well, but instead of the infinite time horizon, we will consider the finite one.  In particular, the p-mixed $H_2$ norm of a system for the finite time horizon can be calculated by
\begin{equation*}
\trace E^\top E \left(\int_0^T \mathrm{e}^{A t} Z \mathrm{e}^{A^\top t} \,\mathrm{d}t \right),
\end{equation*}
where the matrix $A$ comes from the linearization of the system \eqref{MDK}, the matrix $Z$ encodes the information about dangerous external forces and initial conditions, and the matrix $E$ is given in terms of output matrices $E_1$, $E_2$. We will introduce this criterion in more details in the next section.

This objective function can be calculated directly using the following formula:
\begin{equation}\label{directFormula}
\trace{(X- e^{A T}Xe^{A^\top T})}, \text{ where $X$ is such that } A X+X A^\top=-Z.
\end{equation}
We would like to emphasize that the use of formula \eqref{directFormula} requires, besides solving Lyapunov equation, also calculating the matrix exponential, for more details see, e.g. \cite{DATTA04,ANT05,moler2003ndw,Golub2013,Higham05}.

The technique we develop can also be used in a more general setting of the finite horizon $H_2$ control in the case when the number of sensors and actuators is small, the system is parameter-dependent and one needs to calculate the appropriate $H_2$ norm for a large number of different parameters.

We do not pursue this line of research here to make the exposition less technical. Moreover our approach consists of two main parts; the offline part where we calculate matrices that do not depend on viscosities and the online part in which we organize calculations in such a way so that recycling of computationally demanding parts can be achieved efficiently.

One can also consider a model order reduction oriented approach for the finite time horizon problem which will result in a model of reduced order. This can be applied even for a large-scale systems, see, e.g. \cite{GoyalRed19, SinaniGug19, PK18} where authors considered model order reduction for a finite time horizon.  However, when the order of a system is reduced, then our new approach can again be employed for an efficient calculation of the $p$--mixed $H_2$ system norm.

Throughout the paper we will use the following notation. The symbol $\|\cdot \|$ denotes the standard vector norm or matrix 2-norm, depending on the context. If $p$ and $q$ are vectors, like in many programming languages including Julia or Matlab, the notation $A(p,q)$ will denote the submatrix of $A$ obtained by intersection of rows determined with elements of vector $p$ and columns determined with elements of vector $q$. Similarly, $i:k:j$ denotes the vector of integers from $i$ to $j$ with increments of $k$. For integers $j$ and $k$, we denote by $\delta_{j,k}$ the Kronecker delta symbol, i.e.\ $\delta_{j,k}=1$ if $j=k$, otherwise $\delta_{j,k}=0$. Also, for a matrix $A$ and a scalar $s$, $s-A$ denotes the matrix $sI-A$.

The paper is structured as follows. The finite time horizon $p$–mixed $H_2$ norm is introduced in section \ref{sec:criterion}. In section \ref{sec:derivation} we first derive formulae for auxiliary vectors $x_j$. Then,  in subsection \ref{subsec:The case of one-dimensional damping}, we investigate the case of one-dimensional damping and derive final formula for the finite time horizon $p$–mixed $H_2$ norm. Using the derived formulae we present an approach for an estimation of the finite time horizon $p$–mixed $H_2$ norm  in section \ref{sec: Estimation of the integral}. In particular, in subsection \ref{subsec:Estimation of the integration interval} we analyse estimation of the integration interval which is used in our algorithm for the calculation of the finite time horizon $p$–mixed $H_2$
norm presented in subsection \ref{subsec: Algorithm for norm}. In section \ref{sec: Numerical experiments} we illustrate the efficiency of the new approach through numerical experiments.


\end{section}

\begin{section}{Finite time horizon $p$--mixed $H_2$ norm}
\label{sec:criterion}
\begin{subsection}{Preliminaries}
\label{sub:preliminaries}

Differential equation in \eqref{MDK} can be transformed to the first order system in the phase space. For that purpose let $\Phi$ be a matrix which simultaneously diagonalizes $M$ and $K$, that is
\begin{equation}\label{MKdiag}
 \Phi^\top M \Phi = I ,
 \Phi^\top K \Phi = \Omega^2= \diag (\omega_1^2, \ldots, \omega_n^2 ) ,
 \end{equation}
where positive numbers $\omega_1, \omega_2,\ldots , \omega_n$ are undamped eigenfrequencies of the system, i.e.\ square roots of the eigenvalues corresponding to the system with $C=0$ (that is, the eigenvalues of $Q(\lambda) = \lambda^2 M - K$). In this case the matrix $\Phi$ diagonalizes the internal damping matrix defined by \eqref{C_u}, that is $\Phi^\top C_{\mathrm{int}} \Phi =\nu \Omega,$ for $\nu>0$, with $\nu=2\alpha$.
 Now, using the substitutions   $y_1=\Omega \Phi^{-1}x$ and
$y_2=\Phi^{-1}\dot{x} $, the differential equation in \eqref{MDK} can be written as

\begin{equation}\label{eq:first_order_system}
	\dot{y}=A y +  B u,\; y(0)  = y_0,
\end{equation}
where
\begin{equation}\label{A form}
  A=\begin{bmatrix}
  0 & \Omega \\
  -\Omega & -\nu \Omega -D \\
\end{bmatrix}%
,\quad
  B=\begin{bmatrix}
  0  \\
 \Phi^\top B_2  \\
\end{bmatrix},
\quad
 y=
\begin{bmatrix}
  y_1 \\
  y_2 \\
\end{bmatrix},
\end{equation}
 with $D=\Phi^\top C_{\mathrm{ext}} \Phi $.    The output is determined by
\[
w =  E y\quad \text{ with } \quad  E =\begin{bmatrix}
E_1\Phi\Omega^{-1} & 0 \\ 0 & E_2\Phi
\end{bmatrix}.
\]
 Here $y_0$ contains the corresponding transformation of the initial data,
 see, e.g. \cite{BRAB98,  VES2011, TRUHVES09}.  This is the so-called \emph{modal representation} of the system \eqref{MDK}.

The solution of \eqref{MDK} hence can be written as
\begin{equation}
\label{eq:formula_za_w}
w(t) = E \mathrm{e}^{At}y_0 +  E \int_0^t \mathrm{e}^{A(t - \tau)}
B u(\tau) \,\mathrm{d} \tau.
\end{equation}

Let the parameter $p$ satisfy $0 \le p \le 1$.
The $p$--mixed $H_2$ norm of a system $G$, denoted by $\lVert G \rVert_{2,p} $, is defined as
\begin{equation}\label{eq:pmixh2norm}
\lVert G \rVert_{2,p}^2 = (1-p) \lVert G \rVert_2^2 + p \lVert G \rVert_{2,\mathrm{hom}}^2 .
\end{equation}
In \eqref{eq:pmixh2norm} $\lVert \cdot \rVert_2 $ denotes the standard $H_2$ norm given by
\begin{equation*}
\lVert G \rVert_2^2 = \frac{1}{2\pi} \int_{- \infty}^{\infty} \trace (\hat G(i \omega))^* \hat G(i \omega)) \,\mathrm{d} \omega ,
\end{equation*}
where $\hat G$ denotes the transfer function of the system $G$, i.e.\ the Laplace transform of the mapping $u\mapsto w$.
It can be shown, see e.g.\ \cite{dullerud2013course}, that the formula for $\lVert G \rVert_2$ in the time domain is given by
\begin{equation*}
\lVert G \rVert_2^2 = \trace \left(\int_0^\infty  E^\top\mathrm{e}^{At}BB^\top \mathrm{e}^{A^\top t}E \,\mathrm{d} t \right).
\end{equation*}
With $\lVert \cdot \rVert_{2,\mathrm{hom}}$, in \eqref{eq:pmixh2norm}, we denote the $H_2$ norm of the corresponding homogeneous ($u=0$) system given by
\begin{subequations}
\begin{align*}
\lVert G \rVert_{2, \mathrm{hom}}^2 &=  \int_{\lVert y_0 \rVert^2  = 1 } \int_0^{\infty} \lVert w(t;y_0) \rVert^2 \, \mathrm{d}
t \, \mathrm{d} \sigma   \overset{\eqref{eq:formula_za_w}}{=}  \int_{\lVert y_0 \rVert^2  = 1 } y_0^\top \left( \int_0^{\infty}  \mathrm{e}^{A^\top t} E^\top E \mathrm{e}^{At} \, \mathrm{d}
t \right) \, y_0  \, \mathrm{d} \sigma,
\end{align*}
\end{subequations}
where $\sigma$ is an averaging (surface) measure on the unit sphere $\mathbb{R}^{2n}$. More precisely, for a given measure in $\mathbb{R}^{2n}$, the corresponding surface measure $\sigma$ is given by the Minkowski–Steiner formula \cite{federer1969}. Both $\lVert \cdot \rVert_2 $ and $\lVert \cdot \rVert_{2, \mathrm{hom}}$ can be expressed in terms of solutions of Lyapunov equations; for details see \cite{NakTomTr19}.
The $p$--mixed $H_2$ norm can be calculated as (see \cite[eq.\ (14)]{NakTomTr19}):
\begin{equation}
	\label{eq:trace_abs}
\trace (E^\top EX), \text{ where $X$ is such that }  A X + X  A^\top = - p Z_\sigma - (1-p)B B^\top,
\end{equation}
where the matrix $Z_\sigma$ depends on the choice of averaging measure $\sigma$ on the set of unit initial data.

\end{subsection}
\begin{subsection}{Definition of the finite time horizon $p$--mixed $H_2$ norm}
\label{sub:definition_of_}

The Lyapunov equation in \eqref{eq:trace_abs} occurs, in both the $H_2$ term and the homogeneous $H_2$ term, from the integrals in the time domain of the form $\int_0^{\infty} \mathrm{e}^{A t} \boldsymbol{\cdot} \mathrm{e}^{A^\top t} \,\mathrm{d}t$. It is easy to see that in the case of the finite time horizon the formula corresponding to \eqref{eq:trace_abs} is
\begin{equation}
\label{eq:objectiveFun}
\trace \left( E^\top E\int_0^T \mathrm{e}^{A t} (p Z_\sigma + (1-p)B B^\top ) \mathrm{e}^{A^\top t} \,\mathrm{d}t \right) .
\end{equation}
It is well known and used frequently in computations that such an expression again can be written in terms of a Lyapunov equation, and so the last expression can be written as (see, e.g., \cite{ANT05,DATTA04,NakTomTr19})
\begin{equation}\label{eq:lyapeq1}
    \trace \left( E^\top E\tilde X \right),
\end{equation}
where $X$ is such that $A \tilde X + \tilde X A^\top =  \mathrm{e}^{A T} (pZ_\sigma + (1-p)B B^\top ) \mathrm{e}^{A^\top T} - p Z_\sigma - (1-p)B B^\top $.

Indeed, the function $X(t) = \mathrm{e}^{At}(p Z_\sigma + (1-p)B B^\top )\mathrm{e}^{A^\top t}$ is the solution of the Cauchy problem
\begin{equation*}
\dot{\tilde X}(t) = A \tilde X(t) + \tilde X(t)A^\top , \; \tilde X(0) = p Z_\sigma + (1-p)B B^\top ,
\end{equation*}
and by integrating the differential equation from $0$ to $T$ we obtain
\begin{equation*}
\tilde X(T) - \tilde X(0) = A \int_0^T \tilde X(t) \,\mathrm{d}t + \int_0^T \tilde X(t) \,\mathrm{d}t A^\top .
\end{equation*}
If we denote by $X$ the solution of the Lyapunov equation
$$AX + XA^\top = - p Z_\sigma - (1-p)B B^\top, $$ it is easy to see that
$\tilde X = X - \mathrm{e}^{A T} X \mathrm{e}^{A^\top T}$, hence \eqref{eq:lyapeq1} can be written as
\begin{equation}
	\label{eq:lyapeq2}
	\trace \left( E^\top E \left( X - \mathrm{e}^{A T} X \mathrm{e}^{A^\top T} \right)  \right) ,
\end{equation}
where $X$ is such that $AX + XA^\top = - p Z_\sigma - (1-p)B B^\top $.
By duality, \eqref{eq:lyapeq2} is equivalent to
\begin{equation}
	\label{eq:lyapeq1_dual}
\trace \left( (pZ_\sigma + (1-p)B B^\top ) \left( X - \mathrm{e}^{A T} X \mathrm{e}^{A^\top T} \right)  \right) ,
\end{equation}
where $X$ is such that $A^\top X + XA = - E^\top E$.

Although \eqref{eq:lyapeq1_dual} is much easier to compute than directly \eqref{eq:objectiveFun} in the case of one system, we will show that in some instances \eqref{eq:objectiveFun} has an advantage. More precisely, if $E$ has a simple structure, $Z_\sigma$ and $B$ are low rank matrices and the goal is to compute \eqref{eq:objectiveFun} (or \eqref{eq:lyapeq1_dual}) for a large number of external low rank damping matrices $C_{\mathrm{ext}}$, which is a frequent situation when optimizing $C_{\mathrm{ext}}$, the expression \ref{eq:objectiveFun} can be used to construct an efficient algorithm for such a task.

In the sequel we will consider the typical case when
\begin{align}\label{matrices EZ}
E^\top E = \frac{1}{2} I  \quad \mbox{and} \quad    pZ_\sigma + (1-p)B B^\top  = Z,\\
\mbox{ with}\quad
\label{eq:matrixZ}
Z  = \begin{bmatrix}
p Z_1 & 0 \\ 0 & Z_1
\end{bmatrix} \quad \text{and} \quad
Z_1 = \begin{bmatrix}
I_r & 0 \\ 0 & 0
\end{bmatrix}.
\end{align}
Indeed, when modeling vibrational systems, the matrix $B_2$ is usually designed as a band--pass filter where only the dangerous frequencies are passed through. In terms of the modal representation \eqref{eq:first_order_system}-\eqref{A form}, this would mean that $ B_2$ has the form $B_2= Z_1$, where the dangerous frequencies are $\omega_1,\ldots, \omega_r$. Typically $r$ is a much smaller number than $n$ which will also be beneficial for our approach. For the dangerous frequencies we choose those which have a significant influence on the behavior of the system, e.g., those that may lead to system resonances. In damping applications it is typical to only damp dangerous frequencies, hence our choice of the matrix $B_2$.
The measure $\sigma$ typically is chosen in such a way that it attenuates initial data which are not dangerous. In particular, the surface measure can be chosen in such a way that it is generated by the Lebesgue measure on the subspace spanned by the vectors $[x_i, 0]^{\top}$ and $[0,x_i]^{\top}$, $i=1,\ldots,r$, where $x_i$ are the eigenvectors of $\omega_i$, and on the rest of $\mathbb{R}^{2n}$ is generated by the Dirac measure concentrated at zero. Then, in the modal representation we obtain $ Z_\sigma = c\mathop{\mathrm{diag}}(Z_1,Z_1)$, where the constant $c>0$ only depends on the dimension. This corresponds to ignoring the initial data spanned by non-dangerous frequencies. Since the squared $L_2$ norm of the output, $\lVert w \rVert_2^2 $, typically corresponds to the energy of the whole system, in the modal representation we obtain $E^\top E = \frac{1}{2} I$. See \cite[Section 4]{NakTomTr19} for the detailed explanation. Hence, by appropriate scaling, we obtain \eqref{matrices EZ}-\eqref{eq:matrixZ}. Moreover, in the case  when $\lVert w \rVert_2^2 $ equals the energy corresponding to dangerous frequencies, we obtain $E^\top E = \frac{1}{2} Z$,  which can be also covered by our procedure; see \eqref{rem:do_r} below.

Therefore, our goal is to efficiently calculate
\begin{equation}
\label{objectiveFun}
\trace \left(\int_0^T \mathrm{e}^{A t} Z \mathrm{e}^{A^\top t} \,\mathrm{d}t \right),
\end{equation}
with $A$ given in \eqref{A form} and $Z$ given in \eqref{eq:matrixZ}.

Our technique is not limited to this particular choice of matrices $B_2$, $E_1$, $E_2$ and the measure $\sigma$. As long as the corresponding matrix $pZ_\sigma + (1-p)B B^\top$ is of low rank and the matrix $E^\top E$ has a simple structure, one can construct a similar procedure. We limit our attention to this particular case to not overburden the paper with technicalities.

\end{subsection}
\end{section}

\begin{section}{Derivation of the formula for the finite time horizon $p$--mixed $H_2$ norm} 
\label{sec:derivation}
In this section we use \eqref{objectiveFun} directly to obtain a formula for the finite time horizon $p$--mixed $H_2$ norm.
For the purpose of the further structured calculations, we will use the fact that $A$ can be written as
\begin{align}
A &= \begin{bmatrix}
0 & \Omega \\ - \Omega & - \nu \Omega - D
\end{bmatrix} = \begin{bmatrix}
0 & \Omega \\ - \Omega & - \nu \Omega
\end{bmatrix} - \begin{bmatrix}
0 & 0 \\ 0 & D
\end{bmatrix}
=
 A_0 - A_1. \label{DefMatrixA}
\end{align}

Using the fact that the matrix exponential function is the inverse Laplace transform of the corresponding resolvent, we have
\begin{subequations}
\begin{align*}
&\trace \left( \int_0^T \mathrm{e}^{ A t }Z \mathrm{e}^{ A^{\top} t} \,\mathrm{d}t\right)  = \int_0^T \trace ((\sqrt{Z} \mathrm{e}^{ A^{\top} t })^\top (\sqrt{Z} \mathrm{e}^{ A^{\top} t})) \,\mathrm{d}t \\
&= \sum_{j,k=1}^{2n} \int_0^T  \left((\sqrt{Z} \mathrm{e}^{ A^{\top} t})_{j,k}\right)^2  \,\mathrm{d}t  \\
&= \sum_{j,k=1}^{2n} \int_0^T  \left(\frac{1}{2 \pi \mathrm{i}}\left(\sqrt{Z} \int_{-\mathrm{i} \infty}^{+\mathrm{i} \infty} \mathrm{e}^{\lambda t} (\lambda - A^{\top} )^{-1} \,\mathrm{d} \lambda\right)_{j,k}\right)^2  \,\mathrm{d}t \\
&= \sum_{j,k=1}^{2n} \int_0^T  \left(\frac{1}{2 \pi}\left( \int_{- \infty}^{+ \infty} \mathrm{e}^{\mathrm{i} s t} \sqrt{Z}(\mathrm{i}s - A^{\top} )^{-1} \,\mathrm{d} s\right)_{j,k}\right)^2  \,\mathrm{d}t.
\end{align*}
\end{subequations}
Due to the structure of the matrix $Z$, the summation index of $j$ goes from $1$ to $r$ and from $n+1$ to $n+r$, so we obtain
\begin{multline}\label{eq:sums}
\trace \left( \int_0^T \mathrm{e}^{ A t }Z \mathrm{e}^{ A^{\top} t} \,\mathrm{d}t\right)  = \frac{p}{4 \pi^2}  \sum_{j=1}^r \sum_{k=1}^{2n} \int_0^T  \left( \int_{- \infty}^{+ \infty}  h_{jk}(t,s) \,\mathrm{d} s \right)^2  \,\mathrm{d}t \\
+\frac{1}{4 \pi^2} \sum_{j=n+1}^{n+r} \sum_{k=1}^{2n}  \int_0^T  \left( \int_{- \infty}^{+ \infty} h_{jk}(t,s)\,\mathrm{d} s \right)^2  \,\mathrm{d}t ,
\end{multline}
where
\begin{equation}\label{h_{jk}(t,s)}
h_{jk}(t,s)=(\cos st + \mathrm{i} \sin st) e_j^\top(\mathrm{i}s - A^{\top} )^{-1}e_k,
\end{equation}
and $e_j$ denotes the $j-$th canonical vector in $\mathbb{C}^n$, $\mathbb{C}^{2n}$, $\mathbb{R}^n$ or $\mathbb{R}^{2n}$, depending on the context.

\begin{remark}
\label{rem:do_r}
	Note that if we would have chosen $E^\top E=\frac{1}{2} Z$, then the sums in \eqref{eq:sums} would be given by
	$p  \sum_{j,k=1}^r + \sum_{j,k=n+1}^{n+r} $ and hence the Algorithm \ref{algorithm1} from subsection \eqref{subsec: Algorithm for norm} can be easily modified to cover this case as well.
\end{remark}

To construct an efficient algorithm for the calculation of the last sum, we will carefully study the terms $e_j^\top(\mathrm{i}s - A^{\top} )^{-1}e_k$ using the structure of the matrix $A$, distinguishing those terms which do not depend on the $C_{\mathrm{ext}}$.

We calculate
\begin{subequations}
\begin{align*}
(\mathrm{i}s - A^{\top} )^{-1} &= (\mathrm{i}s - A_0^{\top} +A_1)^{-1} = \left( (\mathrm{i}s -A_0^{\top}) (I + (\mathrm{i}s -A_0^{\top})^{-1} A_1) \right)^{-1} \\
&= \left( I + (\mathrm{i}s -A_0^{\top})^{-1} A_1 \right)^{-1}   (\mathrm{i}s -A_0^{\top})^{-1},
\end{align*}
\end{subequations}
hence
\begin{subequations}
\begin{align*}
e_j^\top(\mathrm{i}s - A^{\top})^{-1}e_k &=  e_j^\top\left( I + (\mathrm{i}s -A_0^{\top})^{-1} A_1  \right)^{-1}   (\mathrm{i}s -A_0^{\top})^{-1}e_k \\
&= \left( ( I + (\mathrm{i}s -A_0^{\top})^{-1} A_1)^{-\top}   e_j\right)^\top   (\mathrm{i}s -A_0^{\top})^{-1}e_k \\
&= \left( ( I + A_1(\mathrm{i}s -A_0)^{-1} )^{-1}  e_j\right)^\top   (\mathrm{i}s -A_0^{\top})^{-1}e_k .
\end{align*}
\end{subequations}
Let  $x_j = ( I + A_1(\mathrm{i}s -A_0)^{-1} )^{-1}  e_j$ i.e.\ $(I + A_1(\mathrm{i}s -A_0)^{-1} ) x_j = e_j$ . Let  $x_j = [ x_j^1 \; x_j^2 ]^\top$. Note that $e_j^\top(\mathrm{i}s -A^{\top})^{-1}e_k$ is a scalar product of two vectors, vector $x_j$ depends on $s$, viscosities and damping positions and vector $(\mathrm{i}s -A_0^{\top})^{-1}e_k$ depends only on $s$. Moreover, we would like to derive a formulae for $x_j$ which will be considered in the next subsection.


\begin{subsection}{Calculation of the vectors $x_j$}
\label{subsec:Calculation of the vectors x_j}

From
\begin{equation*}
\mathrm{i}s -A_0 = \begin{bmatrix}
\mathrm{i}s & - \Omega \\ \Omega & \mathrm{i}s + \nu \Omega
\end{bmatrix}
\end{equation*}
we obtain
\begin{equation*}
(\mathrm{i}s -A_0 )^{-1} = \begin{bmatrix}
(\nu \Omega + \mathrm{i}s ) L(s) & \Omega L(s) \\ - \Omega L(s) & \mathrm{i}s L(s)
\end{bmatrix},	
\end{equation*}	
where
\begin{equation}\label{matrixLs}
L(s) = \left( \Omega^2 + \mathrm{i} s \nu \Omega - s^2 \right)^{-1}=F(s)-iG(s),	
\end{equation}
where matrices $F(s)$ and $-G(s)$ are real and imaginary part of matrix $L(s)$. Since the matrix $ \Omega^2 + \mathrm{i} s \nu \Omega - s^2$ is a diagonal complex matrix, its inverse can be calculated directly. Now we have
\begin{subequations}
\begin{align*}
I + A_1(\mathrm{i}s -A_0)^{-1}  &= \begin{bmatrix}
I & 0 \\ 0 & I
\end{bmatrix} + \begin{bmatrix}
0 & 0 \\ 0 & D
\end{bmatrix}
\begin{bmatrix}
(\nu \Omega + \mathrm{i}s ) L(s) &  \Omega L(s) \\  - \Omega L(s) & \mathrm{i}s L(s)
\end{bmatrix}\\
&= \begin{bmatrix}
I & 0 \\ - D \Omega L(s) & I + \mathrm{i}s D L(s)
\end{bmatrix}.
\end{align*}
\end{subequations}
For $ 1\le j \le r$  we obtain
\begin{equation*}
x_j^1 = e_j, \; \left( I + \mathrm{i}s D L(s) \right)x_j^2 =  D \Omega L(s) e_j
\end{equation*}
and for $n+1\le j\le n+r$
\begin{equation*}
x_j^1 = 0, \; \left( I + \mathrm{i}s D L(s) \right)x_j^2 =  e_{j-n} .
\end{equation*}
In the following proposition we obtain the formulae for the real and imaginary parts of complex vectors $x_j^2$, for $j=1\ldots,r,n+1,\ldots,n+r$, in terms of the solutions of \emph{real} linear systems \eqref{eq:sustav1} and \eqref{eq:sustav2}.
\begin{proposition}\label{systems}
Let matrix $L(s)$ be given as in \eqref{matrixLs} and let $x_j^2 = x_j^{\Re} + \mathrm{i} x_j^{\Im}$, with $x_j^{\Re}, x_j^{\Im} \in \mathbb{R}^n$.
Then systems $\left( I + \mathrm{i}s D L(s) \right)x_j^2 =  D \Omega L(s) e_j$, for $ 1\le j \le r$, and $\left( I + \mathrm{i}s D L(s) \right)x_j^2 =  e_{j-n}$, for $n+1\le j\le n+r$, are equivalent to the following systems
\begin{footnotesize}
\begin{multline}\label{eq:sustav1}
    \begin{bmatrix}
	I & -s \left( I + s D G(s) \right)^{-1} D F(s) \\ 0 & s^2 D F(s) \left( I + s D G(s) \right)^{-1} D F(s) + I + s D G(s)
	\end{bmatrix}
	\begin{bmatrix}
	x_j^{\Re} \\ x_j^{\Im}
	\end{bmatrix} \\
	= \begin{bmatrix}
	 \left( I + s D G(s) \right)^{-1} D \Omega F(s) e_j \\- s D F(s) \left( I + s D G(s) \right)^{-1} D \Omega F(s) e_j - D \Omega G(s) e_j
	\end{bmatrix},
\end{multline}
\begin{multline}\label{eq:sustav2}
    \begin{bmatrix}
	I & -s \left( I + s D G(s) \right)^{-1} D F(s) \\ 0 & s^2 D F(s) \left( I + s D G(s) \right)^{-1} D F(s) + I + s D G(s)
	\end{bmatrix}
	\begin{bmatrix}
	x_j^{\Re} \\ x_j^{\Im}
	\end{bmatrix} \\
	= \begin{bmatrix}
	\left( I + s D G(s) \right)^{-1} e_{j-n} \\ -s D  F(s) \left( I + s D G(s) \right)^{-1} e_{j-n}
	\end{bmatrix},
\end{multline}
\end{footnotesize}
respectively.
\end{proposition}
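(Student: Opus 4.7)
The plan is to split the complex equation into its real and imaginary parts, assemble the resulting pair into a $2n \times 2n$ real block system, and then reduce that system to upper block-triangular form by one step of block Gaussian elimination. Using the decompositions $L(s) = F(s) - \mathrm{i} G(s)$ and $x_j^2 = x_j^{\Re} + \mathrm{i} x_j^{\Im}$, set $P := I + s D G(s)$ and $Q := s D F(s)$, so that $I + \mathrm{i} s D L(s) = P + \mathrm{i} Q$. For $1 \le j \le r$ the right-hand side decomposes as
\[
D \Omega L(s) e_j = D \Omega F(s) e_j - \mathrm{i}\, D \Omega G(s) e_j =: b_1 - \mathrm{i} b_2,
\]
while for $n+1 \le j \le n+r$ it is simply $e_{j-n} =: b_1$ with $b_2 = 0$. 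Equating real and imaginary parts in $(P + \mathrm{i} Q)(x_j^{\Re} + \mathrm{i} x_j^{\Im}) = b_1 - \mathrm{i} b_2$ yields the real block system
\[
\begin{bmatrix} P & -Q \\ Q & P \end{bmatrix} \begin{bmatrix} x_j^{\Re} \\ x_j^{\Im} \end{bmatrix} = \begin{bmatrix} b_1 \\ -b_2 \end{bmatrix}.
\]

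Next I would establish invertibility of $P = I + s D G(s)$. The diagonal entries of $sG(s)$ equal $\nu \omega_k s^2/((\omega_k^2 - s^2)^2 + \nu^2 \omega_k^2 s^2) \ge 0$ for every real $s$, so $sG(s) \succeq 0$; together with $D \succeq 0$, the standard fact that the product of two symmetric positive semidefinite matrices has nonnegative spectrum shows that $-1$ is not an eigenvalue of $sDG(s)$, hence $P$ is invertible. Left-multiplying the block system by $\mathrm{diag}(P^{-1}, I)$ and then performing the block row operation that replaces row $2$ by row $2$ minus $Q$ times row $1$ produces the upper block-triangular system
\[
\begin{bmatrix} I & -P^{-1} Q \\ 0 & P + Q P^{-1} Q \end{bmatrix} \begin{bmatrix} x_j^{\Re} \\ x_j^{\Im} \end{bmatrix} = \begin{bmatrix} P^{-1} b_1 \\ -b_2 - Q P^{-1} b_1 \end{bmatrix}.
\]
Re-substituting $P = I + sDG(s)$ and $Q = sDF(s)$ in the coefficient matrix matches the blocks in \eqref{eq:sustav1} and \eqref{eq:sustav2}; substituting the two choices of $(b_1, b_2)$ produces the respective right-hand sides.

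Since taking real and imaginary parts is a bijection between $\mathbb{C}^n$ and $\mathbb{R}^n \oplus \mathbb{R}^n$ and the block row operations are invertible, the derived real systems are equivalent to the original complex equations, as claimed. The only ingredient beyond routine bookkeeping is the invertibility of $P$, which I regard as the main (if modest) obstacle; the rest of the argument is a single Schur-complement-style elimination against the $(1,1)$ block.
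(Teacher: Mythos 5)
Your proof is correct and follows essentially the same route as the paper's: split $L(s)=F(s)-\mathrm{i}G(s)$ and $x_j^2=x_j^{\Re}+\mathrm{i}x_j^{\Im}$, assemble the real $2n\times 2n$ block system with blocks $P=I+sDG(s)$ and $Q=sDF(s)$, and eliminate via left multiplication by an invertible block-triangular matrix (your two-step scale-then-eliminate is exactly the paper's single multiplication by $\bigl[\begin{smallmatrix}P^{-1}&0\\-QP^{-1}&I\end{smallmatrix}\bigr]$). The only difference is that you spell out the invertibility of $I+sDG(s)$ via the nonnegative spectrum of a product of positive semidefinite matrices, which the paper dismisses as "easy to see"; this is a welcome but inessential elaboration.
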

Proof of this proposition is given in Appendix \ref{Appendix}.

In the next subsection we analyse formulae for the case of one-dimensional damping which will allow us to construct an efficient procedure for the calculation of the finite time horizon $p$--mixed $H_2$ norm.
\end{subsection}

\begin{subsection}{The case of one-dimensional damping}
\label{subsec:The case of one-dimensional damping}

The systems of linear equations given above can be solved in a more general setting. But in this paper, to reduce the technicalities, we only treat the particular case when there is just one damper of dimension one, and the only parameter is its viscosity. The procedure can be straightforwardly extended to the multi-parameter case in the case of a small number of parameters. Then the damping matrix $D$ has the following form
\begin{equation}\label{Cext}
    D=\Phi^\top C_{\mathrm{ext}} \Phi = v \Phi^\top e e^\top \Phi = \gamma UU^\top,
\end{equation}
where the vector $e$ encodes the position of the damper.
The parameter $\gamma >0$ is a product of viscosity parameter $v$ and the 2-norm of the matrix $\Phi^\top e e^\top \Phi$ and vector $U\in \mathbb{R}^{n \times 1}$ determines the geometry of the damping position.


In the following proposition we give explicit solution of the equations \eqref{eq:sustav1}-\eqref{eq:sustav2} in terms of the parameter $\gamma$ that determines the viscosity parameter.
\begin{proposition}\label{prop:solutionxj}
  Let  $q=1$ and assume that $U$ and $\gamma$ define  damping matrix $D$ as in \eqref{Cext}. Furthermore, let $g(s)=U^\top G(s)U=\sum_{j=1}^n u_j^2g_j(s)$ and $f(s)=U^\top F(s)U=\sum_{j=1}^n u_j^2f_j(s)$, where $f_j(s)$ and $g_j(s)$, for $j=1,\ldots, n$ are diagonal elements of $F(s)$ and $G(s)$ given by \eqref{matrixLs}, respectively.    Then, the solution of \eqref{eq:sustav1} is given by ($1\le j \le r$)
\begin{align}
	x_j^\Im (s) &= a(s,\gamma)  U, \label{xj1}\\
	x_j^\Re (s) &= \left( \frac{\gamma f_j(s) u_j \omega_j}{1+s \gamma g(s)} + \frac{s\gamma f(s)}{1+s\gamma g(s)}a(s,\gamma) \right)U,\label{xr1}
\end{align}
where
\begin{multline}
    a(s,\gamma)=- \gamma u_j \omega_j \times\\ \times \frac{g_j(s)+ s \gamma \left(f(s)f_j(s)+2g(s)g_j(s)\right)+ s^2\gamma^2g(s)\left(f(s)f_j(s)+g(s) g_j(s)\right)}{1+ 3 s \gamma g(s) +s^2\gamma^2 \left(3g(s)^2+f(s)^2\right)+s^3\gamma^3 g(s)\left(g(s)^2+f(s)^2)\right)},
\end{multline}
and the solution of \eqref{eq:sustav2} is given by ($n+1\le j \le n+r$)
\begin{align}
	x_j^\Im (s) &= b(s,\gamma)  U, \label{xj2}\\
	x_j^\Re (s) &= e_{j-n} + \left(-\frac{s\gamma u_{j-n}g_{j-n}(s)}{1+s\gamma g(s)} + \frac{s\gamma f(s)}{1+s\gamma g(s)}b(s,\gamma) \right)U,\label{xr2}
\end{align}
where
\begin{equation}
    b(s,\gamma)=-s\gamma u_{j-n} \frac{f_{j-n}(s)+s\gamma(g(s)f_{j-n}(s)-f(s)g_{j-n}(s))}{(1+s\gamma  g(s))^2+(s \gamma f(s))^2}.
\end{equation}
\end{proposition}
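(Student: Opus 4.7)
The plan is to exploit the rank--one structure $D = \gamma U U^\top$ via the Sherman--Morrison formula together with the block upper triangular form of \eqref{eq:sustav1} and \eqref{eq:sustav2}: first solve the lower block for $x_j^\Im$, then back-substitute in the top block for $x_j^\Re$. Since $F(s)$ and $G(s)$ are diagonal (hence symmetric) with entries $f_j(s)$ and $g_j(s)$, Sherman--Morrison gives the two identities
\begin{equation*}
(I + s D G(s))^{-1} = I - \frac{s\gamma \, U U^\top G(s)}{1 + s\gamma g(s)}, \qquad (I + s D G(s))^{-1} U = \frac{U}{1 + s\gamma g(s)},
\end{equation*}
and combined with $U^\top F(s) U = f(s)$, $U^\top \Omega F(s) e_j = u_j \omega_j f_j(s)$, and $U^\top G(s) e_{j-n} = u_{j-n} g_{j-n}(s)$, they yield clean scalar values for every contraction that will appear.

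For both systems the lower-right block of the coefficient matrix is a rank--one perturbation of the identity whose range lies in $\mathrm{span}(U)$, which suggests the ansatz $x_j^\Im = a(s,\gamma)\, U$ (respectively $b(s,\gamma)\, U$). Applying the block to $U$ collapses to a scalar multiplication
\begin{equation*}
\bigl[s^2 D F(s)(I+sDG(s))^{-1}D F(s) + I + s D G(s)\bigr] U = \frac{(1+s\gamma g(s))^2 + s^2\gamma^2 f(s)^2}{1+s\gamma g(s)} \, U,
\end{equation*}
so all that remains is to compute the right-hand sides, project them onto $\mathrm{span}(U)$, and divide by the scalar above. For \eqref{eq:sustav1} the right-hand side is $-s D F(s)(I+sDG(s))^{-1} D\Omega F(s) e_j - D \Omega G(s) e_j$, which collapses to a scalar multiple of $U$ via the identities; for \eqref{eq:sustav2} the right-hand side is $-s D F(s)(I+sDG(s))^{-1} e_{j-n}$, which also lies in $\mathrm{span}(U)$ since $D F(s)$ does. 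Once $a(s,\gamma)$, respectively $b(s,\gamma)$, is determined, back-substitution into the top block row yields \eqref{xr1} and \eqref{xr2} immediately, and for \eqref{eq:sustav2} the surviving $e_{j-n}$ appears because $(I+sDG(s))^{-1} e_{j-n} = e_{j-n} - \frac{s\gamma u_{j-n} g_{j-n}(s)}{1+s\gamma g(s)} U$.

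The only non-routine step is reconciling the form of $a(s,\gamma)$ that falls out of the computation with the form displayed in the proposition: the two differ by a common factor of $(1+s\gamma g(s))$ in numerator and denominator. Multiplying through by this factor and using the algebraic identity
\begin{equation*}
(1+s\gamma g(s))\bigl[(1+s\gamma g(s))^2 + s^2\gamma^2 f(s)^2\bigr] = 1 + 3s\gamma g(s) + s^2\gamma^2(3g(s)^2+f(s)^2) + s^3\gamma^3 g(s)(g(s)^2+f(s)^2)
\end{equation*}
recovers the displayed denominator, while the parallel expansion of the numerator produces $g_j(s) + s\gamma(f(s)f_j(s) + 2 g(s)g_j(s)) + s^2\gamma^2 g(s)(f(s)f_j(s)+g(s)g_j(s))$. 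No such cosmetic cleanup is needed for $b(s,\gamma)$, which already appears in its natural form. The main obstacle is therefore not conceptual but bookkeeping: keeping track of the sign convention in $L(s) = F(s) - \mathrm{i} G(s)$ (so that $+sDG(s)$ enters the coefficient matrix with the correct sign) and organising the rational expressions so that the two forms of the denominator are visibly equal.
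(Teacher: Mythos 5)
Your proposal is correct and follows essentially the same route as the paper's proof: both exploit $D=\gamma UU^\top$ via the Sherman--Morrison--Woodbury formula, solve the lower block for $x_j^\Im$, back-substitute for $x_j^\Re$, and reconcile the result with the stated form of $a(s,\gamma)$ by cancelling the common factor $(1+s\gamma g(s))$. The only (minor) difference is that where the paper explicitly inverts the $(2,2)$ block by a second application of Sherman--Morrison, you resolve it by the equally valid observation that this block maps $\mathrm{span}(U)$ into itself and the right-hand side lies in $\mathrm{span}(U)$.
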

Proof of this proposition is given in Appendix \ref{Appendix}.

Obviously, $ x_j^{\Re}$ and $x_j^{\Im}$ depend on $s$ but not on $t$ and sometimes, to emphasise this, we write $ x_j^{\Re} (s)$ and $x_j^{\Im}(s)$.

The following proposition gives formulae for  $h_{jk}(t,s)$ defined by  \eqref{h_{jk}(t,s)}, which are our main target in light of \eqref{eq:sums}.
\begin{proposition}\label{prop:solutionHjk}
Let all assumptions from \eqref{prop:solutionxj} hold. Then, for $h_{jk}(t,s)$, defined by  \eqref{h_{jk}(t,s)}, we have the following formulae.\\
For $1\le j \le r$ and $1\le k\le n$ we have
\begin{equation*}
h_{jk}(t,s) =
2  \cos st \cdot  \left( \left(\nu \omega_k f_k(s) + s g_k(s)\right)\delta_{j,k} + \omega_k \left( f_k(s) (x_j^{\Re})_k +  g_k(s) (x_j^{\Im})_k\right) \right) .
\end{equation*}
For $1\le j \le r$ and $n+1\le k\le 2n$ we have
\begin{equation*}
	h_{jk}(t,s) =2\cos  st \cdot \left(- \omega_{k-n} f_{k-n}(s) \delta_{j,k-n} + s g_{k-n} (s) (x_j^{\Re})_{k-n} - s f_{k-n} (s) (x_j^{\Im})_{k-n} \right) .
\end{equation*}
For $n+1\le j \le n+r$ and $1\le k\le n$ we have
\begin{align*}
h_{jk}(t,s) &=  2  \cos st \cdot \left( \omega_k \left( f_k(s) (x_j^{\Re})_k + g_k(s)(x_j^{\Im})_k \right)  \right).
\end{align*}
For $n+1\le j \le n+r$ and $n+1\le k\le 2n$ we have
\begin{align*}
h_{jk}(t,s) &= 2s  \cos st \cdot \left( g_{k-n}(s) (x_j^{\Re})_{k-n}  - f_{k-n} (s) (x_j^{\Im})_{k-n} \right) .
\end{align*}
\end{proposition}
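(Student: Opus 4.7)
The plan is to compute $h_{jk}(t,s) = (\cos st + \mathrm{i}\sin st)\, e_j^\top(\mathrm{i}s - A^\top)^{-1} e_k$ by exploiting the factorization derived just before the proposition, namely $e_j^\top(\mathrm{i}s - A^\top)^{-1} e_k = x_j^\top (\mathrm{i}s - A_0^\top)^{-1} e_k$, with the vectors $x_j$ described explicitly by Proposition~\ref{prop:solutionxj}. The advantage is that $(\mathrm{i}s - A_0^\top)^{-1}$ has a $2\times 2$ block form whose blocks are all diagonal, built from $\Omega$ and $L(s)=F(s)-\mathrm{i}G(s)$, so each scalar product reduces to a few scalar entries in each of the four index cases.

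First I would write out $(\mathrm{i}s - A_0^\top)^{-1} e_k$ explicitly. For $1\le k\le n$ its two blocks are $(\nu\omega_k + \mathrm{i}s)\, l_k(s)\, e_k$ and $\omega_k l_k(s)\, e_k$; for $n+1\le k\le 2n$ they are $-\omega_{k-n}\, l_{k-n}(s)\, e_{k-n}$ and $\mathrm{i}s\, l_{k-n}(s)\, e_{k-n}$, where $l_k(s)=f_k(s)-\mathrm{i}g_k(s)$. I would then pair these with $x_j = (e_j,\, x_j^\Re + \mathrm{i} x_j^\Im)^\top$ for $1\le j\le r$ and with $x_j = (0,\, x_j^\Re + \mathrm{i} x_j^\Im)^\top$ for $n+1\le j\le n+r$, handling the four combinations separately. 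Collecting real and imaginary parts one obtains $e_j^\top(\mathrm{i}s - A^\top)^{-1} e_k = R_{jk}(s) + \mathrm{i}\, I_{jk}(s)$, where in every case $R_{jk}(s)$ matches exactly the bracket appearing in the proposition's statement.

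The final step is to recognize that when multiplied by $\cos st + \mathrm{i}\sin st$ the expression is equivalent, inside the symmetric integral over $s\in\mathbb{R}$ that appears in \eqref{eq:sums}, to $2\cos st \cdot R_{jk}(s)$. Here I would use the $s$-parity of the building blocks: $f_k, F$ and the components of $x_j^\Re$ are even in $s$, while $g_k, G$ and the components of $x_j^\Im$ are odd (immediate from $L(-s)=\overline{L(s)}$ and the real structure of the systems \eqref{eq:sustav1}--\eqref{eq:sustav2}), so $R_{jk}$ is even and $I_{jk}$ is odd. Expanding $(\cos st + \mathrm{i}\sin st)(R_{jk} + \mathrm{i} I_{jk})$, the imaginary part is odd in $s$ and drops from the symmetric integral, leaving $\cos st\, R_{jk} - \sin st\, I_{jk}$. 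The residual $\sin st\, I_{jk}$ then disappears via the identity $\int_{-\infty}^{\infty} \mathrm{e}^{-\mathrm{i}st}\, e_j^\top(\mathrm{i}s - A^\top)^{-1} e_k\, \mathrm{d}s = 0$ for $t>0$, which follows from Jordan's lemma because all poles of $(\mathrm{i}s - A^\top)^{-1}$ lie in the upper half plane when $A$ is stable; taking its real part yields $\int \sin st\, I_{jk}(s)\, \mathrm{d}s = -\int \cos st\, R_{jk}(s)\, \mathrm{d}s$, which exactly doubles the $\cos st\, R_{jk}$ contribution.

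The main obstacle is precisely this last point: the stated formula for $h_{jk}(t,s)$ is not a pointwise equality in $s$ but an identity that only holds after integration over $\mathbb{R}$, so one has to make explicit the convention under which the proposition is read (as an identity between integrands modulo terms whose $s$-integral vanishes) and combine the parity argument with the Jordan-type contour argument. Once this is settled, each of the four index cases reduces to a routine algebraic simplification of $x_j^\top(\mathrm{i}s-A_0^\top)^{-1}e_k$ with no further subtlety.
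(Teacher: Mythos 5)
Your proposal is correct and follows essentially the same route as the paper: write out $(\mathrm{i}s-A_0^\top)^{-1}e_k$ blockwise, pair it with $x_j$ from Proposition \ref{prop:solutionxj}, split into real and imaginary parts, and argue that only $2\cos st\cdot\Re\bigl(e_j^\top(\mathrm{i}s-A^\top)^{-1}e_k\bigr)$ survives. The only difference is that you justify the final doubling step explicitly via parity and a Jordan-lemma contour argument, whereas the paper invokes the standard real-inversion formula for Laplace transforms (citing Cohen); your remark that the identity holds only under the $s$-integral rather than pointwise is a point the paper leaves implicit.
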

Proof of this proposition is given in Appendix \ref{Appendix}.

Finally, using derived explicit  formulae for $h_{jk}$ we are able to write a formula for the finite time horizon $p$--mixed $H_2$ norm, that is, we calculated all the ingredients of the formula \eqref{eq:sums}.

In this section we only considered formulae for the one-parameter case, meaning that $U\in \mathbb{R}^{n \times q}$ is a vector. These formulae can be extended straightforwardly to the multi-parameter case, which can still be used for an efficient p-mixed $H_2$ norm calculation as long as we have a small number of parameters, meaning $q\ll n$. In general, the matrix $U$ would contain $q>1$ columns, and the Sherman–Morrison–Woodbury formula would include the inverses of $q \times q$ matrices. The obtained formulae would have a similar structure, which would lead to an algorithm of the same structure as the one given in \ref{algorithm1}. We have considered only the one-parameter case to simplify the exposition, as in the general case the formulae are more complicated.

Now, in the next section we explain how to use the quadrature rule in order to calculate an approximation of the expression above.

\end{subsection}

\end{section}


\begin{section}{Estimation of the integrals}
\label{sec: Estimation of the integral}

Our approach is based on a numerical integration of a very structured and oscillatory  function.  This  is a widely investigated field and overview of some methods can be found in  \cite{DHI18,2004orthogonal,Cohen07}. In order to be able to recycle data and use our formulae efficiently we will use Gauss quadrature rule for the integration with respect to time and adaptive Simpson rule (see, e.g., \cite{2004orthogonal}) for the highly oscillatory part.
In particular, we will use the following estimate for \eqref{eq:sums}
\begin{multline}\label{estint}
\trace \left( \int_0^T \mathrm{e}^{ A t }Z \mathrm{e}^{ A^{\top} t} \,\mathrm{d}t\right) \approx  \frac{p}{4 \pi^2}  \sum_{j=1}^r \sum_{k=1}^{2n}  \sum_{\alpha =1}^{n_t} \eta_\alpha \left( \sum_{\beta =1}^{n_s} \zeta_\beta h_{jk}(t_\alpha,s_\beta)  \right)^2 \\+\frac{1}{4 \pi^2}  \sum_{j=n+1}^{n+r} \sum_{k=1}^{2n}   \sum_{\alpha =1}^{n_t} \eta_\alpha \left( \sum_{\beta =1}^{n_s} \zeta_\beta h_{jk}(t_\alpha,s_\beta)  \right)^2 ,
\end{multline}
where $\{(t_\alpha, \eta_\alpha)\}_{\alpha =1}^{n_t}$ are the nodes and weights for the integral over the time variable $t$ and $\{(s_\beta, \zeta_\beta)\}_{\beta =1}^{n_s}$ are the nodes and weights for the integral over the frequency variable $s$.

For the estimation of $\int_{- \infty}^{+ \infty} h_{jk}(t,s) \,\mathrm{d} s$ we will use combination of Simpson rule and adaptive Simpson rule for the  highly oscillatory part.
First we estimate the indefinite integral by its finite approximation
\begin{equation*}
\int_{- \infty}^{+ \infty} h_{jk}(t,s) \,\mathrm{d} s = 2\int_{- \infty}^{0} h_{jk}(t,s) \,\mathrm{d} s \approx  2\int_{- S_{\max}}^{0} h_{jk}(t,s) \,\mathrm{d} s.
\end{equation*}
In the next subsection   we will derive an upper bound which shows how large parameter $S_{\max}$ should be.


\begin{subsection}{Estimation of the integration interval}
\label{subsec:Estimation of the integration interval}

Since the leading term in our integral is $  e_j^\top (\mathrm{i}s-A^\top )^{-1} e_k ,$ for $j=1,\ldots,r,n+1,\ldots, n+r$, $k=1,\ldots, 2n$, we will try to determine efficiently how large (and small) values of the parameter $s$ should be considered. Assume that the parameter $\gamma$ is taken from the range $[0, \gamma_{\max}]$ and let $\omega_{\max} = \max \left\{ \omega_1,\ldots, \omega_n \right\}$ .

By using the structure of the matrix $A$ given by \eqref{DefMatrixA} we have
\begin{equation*}
    (\mathrm{i}s-A^\top)^{-1}   =
  \left( I + (\mathrm{i}s -A_0^\top)^{-1} A_1 \right)^{-1}
   (\mathrm{i}s -A_0^\top)^{-1}.
\end{equation*}
When  $\|(\mathrm{i}s -A_0^\top)^{-1} A_1\|<1$ we have that
$ I + (\mathrm{i}s -A_0^\top)^{-1} A_1$ is a non-singular matrix and we have $\| \left( I + (\mathrm{i}s -A_0^\top)^{-1} A_1\right)^{-1}  \|\leq  \left(1 -\| (\mathrm{i}s -A_0^\top)^{-1}A_1\|\right)^{-1} $. If we use that $\| A_1\| \leq \gamma$, we get
\begin{equation}\label{ocjena1}
  \|   (\mathrm{i}s-A^\top)^{-1}   \| \leq
 \frac{\|(\mathrm{i}s-A_0^\top)^{-1}\|}{1-\gamma \|(\mathrm{i}s-A_0^\top)^{-1}\|}.
\end{equation}
We want to evaluate the norm of the matrix $(\mathrm{i}s-A_0^\top)^{-1}$, for which it is sufficient to calculate the eigenvalues of the matrix $(\mathrm{i}s-A_0^\top)^{-1}(-\mathrm{i}s-A_0 )^{-1}$. Since
\begin{align*}
    (\mathrm{i}s-A_0^\top)^{-1}(-\mathrm{i}s-A_0 )^{-1} &= (s^2+\mathrm{i}s(A_0^\top-A_0 )+A_0^\top A_0)^{-1} \\
    &= \begin{bmatrix}
    s^2+\Omega^2  & - 2\mathrm{i}s\Omega -\nu \Omega^2\\
     2\mathrm{i}s\Omega - \nu \Omega^2  & s^2+(1+\nu^2)\Omega^2
    \end{bmatrix}^{-1},
\end{align*}
it is sufficient to investigate the eigenvalues of $B^{-1}$, where
\begin{equation*}
    B=  \begin{bmatrix}
    s^2+\omega^2  & - 2\mathrm{i}s\omega -\nu \omega^2\\
    2\mathrm{i}s\omega - \nu \omega^2  & s^2+(1+\nu^2)\omega^2
    \end{bmatrix} .
\end{equation*}
We have
\begin{equation*}
    \det(B-\lambda I)=(s^2+\omega^2-\lambda)(s^2+\omega^2+\nu^2\omega^2-\lambda)-(2\mathrm{i}s\omega-\nu\omega^2)(-2\mathrm{i}s\omega-\nu\omega^2)=0.
\end{equation*}
We can write this as
\begin{equation*}
    \mu(\mu+\nu^2\omega^2)-(\nu^2\omega^4+4s^2\omega^2)=0,
\end{equation*}
for $\mu=s^2+\omega^2-\lambda$. This is the quadratic equation in the variable $\mu$ and the solutions are given by
\begin{equation*}
    \mu_{1,2}=\frac{-\nu^2 \omega^2 \pm \sqrt{\nu^4\omega^4+4(\nu^2\omega^4+4s^2\omega^2)}}{2}.
\end{equation*}
Hence the eigenvalues of matrix $B$ are given by
\begin{equation*}
    \lambda_{1,2}=\frac{\nu^2 \omega^2 \mp \sqrt{\nu^4\omega^4+4(\nu^2\omega^4+4s^2\omega^2)}}{2}+s^2+\omega^2.
\end{equation*}
If we want $\|(\mathrm{i}s-A_0^\top)^{-1}\|<\delta$ for some tolerance $\delta>0$, we must have
\begin{equation*}
\frac{\displaystyle 1}{\displaystyle \sqrt{\lambda_{1,2}}}<\delta \Rightarrow \lambda_{1,2} \geq \frac{\displaystyle 1}{\displaystyle \delta^2},
\end{equation*}
for all $\omega = \omega_1,\ldots, \omega_n$.
Note that
\begin{equation*}
    \lambda_{1,2}\geq \frac{\nu^2 \omega^2 - \nu^2 \omega^2 -2\nu\omega^2 - 4s\omega}{2}+s^2+\omega^2=(s-\omega)^2-\nu\omega^2.
\end{equation*}
To obtain $\|(\mathrm{i}s-A^\top)^{-1}\|<\varepsilon$ for a tolerance $\varepsilon >0$, note that from \eqref{ocjena1} it follows that it is sufficient to have
\begin{equation*}
    \|(\mathrm{i}s-A_0^\top)^{-1}\|\leq \frac{\varepsilon}{1+\gamma \varepsilon}.
\end{equation*}
Now we can take $\delta=\frac{\displaystyle \varepsilon}{\displaystyle 1+\gamma \varepsilon}$, so it follows
\begin{equation*}
(s-\omega)^2-\nu\omega^2 \geq \frac{(1+\gamma \varepsilon)^2}{\varepsilon^2}
\end{equation*}
which implies
\begin{equation*}
    s^2-2\omega s + (1-\nu)\omega^2 - \frac{(1+\gamma \varepsilon)^2}{\varepsilon^2} \geq 0.
\end{equation*}
This is a quadratic inequality in the variable $s$ which is satisfied for all $s$ for which we have
\begin{equation*}
 s \geq \omega +\sqrt{\nu \omega^2+\frac{(1+\gamma\varepsilon)^2}{\varepsilon^2}}.
\end{equation*}
Hence the inequality $\|(\mathrm{i}s-A^\top)^{-1}\|<\varepsilon$ will be satisfied if we take
\begin{equation}\label{Smaxineq2}
 S_{\max} = \omega_{\max} +\sqrt{\nu \omega_{\max}^2+\frac{(1+\gamma_{\max}\varepsilon)^2}{\varepsilon^2}}.
\end{equation}
In order to better illustrate the dependence on parameters $\omega_{\max}$ and $\gamma_{\max}$ and the tolerance $\varepsilon$, we can also use the following, slightly worse, bound
\begin{equation*}
    S_{\max} \geq (1+\sqrt{\nu})\omega_{\max}  +\frac{1}{\varepsilon}+\gamma_{\max}.
\end{equation*}

\end{subsection}


\begin{subsection}{Algorithm for the calculation of the finite time horizon $p$--mixed $H_2$ norm}
\label{subsec: Algorithm for norm}

To take into account the oscillating nature of the function we integrate, we divide the integration interval in two parts
\begin{equation*}
2\int_{- S_{\max}}^{0} h_{jk}(t,s) \,\mathrm{d} s =  2\left( \int_{- S_{\max}}^{-S_1} h_{jk}(t,s) \,\mathrm{d} s + \int_{-S_1}^{0} h_{jk}(t,s) \,\mathrm{d} s \right).
\end{equation*}

In the first integral on the right hand side we will use the standard Simpson rule because this integral does not oscillate as highly as the second one. In the second integral on the right hand side we will use the adaptive Simpson rule since the considered function is highly oscillatory on that segment. Moreover, adaptive Simpson is appropriate for our implementation since we use that the number of nodes is a power of 2 and therefore recycling can be done easily.

Algorithm for the calculation of the finite time horizon $p$--mixed $H_2$ norm defined by \eqref{estint} is given by Algorithm \ref{algorithm1}.

\begin{algorithm}[htp]
\caption{Algorithm for the calculation of the finite time horizon $p$--mixed $H_2$ norm}
\label{algorithm1}
\begin{algorithmic}[1]
\REQUIRE
system matrices: $M$ (mass matrix), $K$ (stiffness matrix), $C_{\mathrm{ext}}$ (external damping); system parameters: $\nu$  (determines $C_{\mathrm{int}}$);\\
 tolerances $tol_s$ (tolerance for $S_{\max}$), $tol$ (integration tolerance);\\
the finite time horizon $p$--mixed $H_2$ norm parameters: $p\in [0,1]$ (determine  the target norm),
$r$ (number of undamped frequencies that need to be damped),
$T$ (defines time horizon);\\
$n_t$  (number of nodes for the integration by $t$);\\
$v_1$, $v_2$, \ldots $v_{n_v}$ ($n_v$ considered viscosities);\\
$b_{\max}$ and $b_0$ (maximum and initial number of nodes for adaptive Simpson rule for second integral is $n_2=2^{b_{\max}}$ and $2^{b_0}$);\\
$n_1$ (number of nodes for the first integral, $n_s=n_1+n_2$ is total number of nodes for integration by $s$);\\
$S_1$ (limit for integration by variable $s$ of first integral); 
\ENSURE  Estimation of \eqref{objectiveFun} \\
\textsf{Offline part:}
\STATE\label{smax}{Determine $S_{\max}$ such that equality \eqref{Smaxineq2} holds.
\STATE Determine equidistant nodes $s_1,\ldots,s_{n_1}\in [-S_{\max},-S_1]$, $s_{n_1+1},\ldots,s_{n_s}\in [-S_1,0]$ for integration by s.}
\STATE{Determine nodes $t_1,\ldots,t_{n_t}\in [0,T]$ and weights for integration by $t$.
\STATE \label{first}Compute  $\cos(s_i t_l)$ for every node $s_i$, $i=1,\ldots,n_s$ and $t_l$, $l=1,\ldots,n_t$.}
\STATE\label{Compute matrices GF}{Compute matrices $F(s_i)$ and $G(s_i)$ from \eqref{matrixLs} for all nodes $s_i$, $i=1,\ldots,n_s$.}
\STATE\label{Compute vector fg}{Compute $f(s_i)$ and $g(s_i)$ defined in Proposition \ref{prop:solutionxj} for all nodes $s_i$, $i=1,\ldots,n_s$.}\\
\textsf{Online part:}
\FOR{considered viscosities $v_1$, $v_2$, \ldots $v_{n_v}$}\label{forstep}
\STATE\label{computex}{Compute vectors $x_j^\Im (s_i)$ and $x_j^\Re (s_i)$ from \eqref{xj1}-\eqref{xr2} for all nodes $s_i$, $i=1,\ldots,n_s$ and all $j=1,\ldots,r,n+1,\ldots,n+r$ .}
\FOR{$j=1:r,n+1:n+r$ }
\FOR{$k=1:2n$}
\STATE\label{computeisA}{Evaluate functions $h_{jk}$ on a given grid $(s_i,t_l)$ using formulae from subsection \eqref{subsec:The case of one-dimensional damping} while recycling parts which are time-independent.}
\STATE\label{computeintpos1}{Use the Simpson rule to compute $\int_{- S_{\max}}^{-S_1} h_{jk}(t,s) \,\mathrm{d} s$.}
\STATE\label{computeintpos2}{Use the adaptive Simpson rule to compute $\int_{-S_1}^{0} h_{jk}(t,s) \,\mathrm{d} s $.}
\STATE\label{computeintpot}{Use the  quadrature rule to compute $\int_0^T  \left( \int_{- S_{\max}}^{ S_{\max}} h_{jk}(t,s) \,\mathrm{d} s \right)^2  \,\mathrm{d}t$. }
\ENDFOR
\ENDFOR
\ENDFOR
\end{algorithmic}
\end{algorithm}

\end{subsection}

\begin{subsubsection}{Algorithm description}

The Algorithm \ref{algorithm1} consists of two parts; the offline and the online part.

First, we give more details regarding the \textbf{offline part.} In the offline part, we use the fact that some intermediate calculations can be effectively stored in matrices that do not depend on viscosity, and therefore, they can be calculated only once. In particular, in the offline part, we prepare the matrix of the type $ \mathbb{R}^{n_s \times n_t}$ which entries are $\cos(s_i t_l)$, $i=1,\ldots,n_s$, $l=1,\ldots,n_t$.
Also, we prepare matrices of the type $  \mathbb{R}^{n_s \times n}$ with rows being the diagonal elements of matrices $F(s_i)$ and $G(s_i)$, $i=1,\ldots,n_s$. As entries of these matrices do not depend on the viscosities they can be calculated in the offline part.

In the \textbf{online part} we organize calculations in such a way that we can recycle computationally demanding parts. First, we define tensors $x_{\Im}$, $x_{\Re} \in \mathbb{R}^{n_s \times 2r \times n}$, where $x_{\Im}(s_i,j,:)= x_j^\Im (s_i)$ and $x_{\Re}(s_i,j,:)=x_j^\Re (s_i)$, for $i=1,\ldots,n_s$ and $j=1,\ldots,r,n+1,\ldots,n+r$. Here $x_j^\Im (s_i)$ and $x_j^\Re (s_i)$ are defined in \eqref{xj1} - \eqref{xr2}.

Then, in the most computationally demanding part of the algorithm line \ref{computeisA} - line \ref{computeintpot}, for every $k=1,\ldots,2n$ and $j=1, \ldots, r,n+1, \ldots, n+r$, we calculate the terms $h_{jk}(t_l,s_i)$ (whose formulae are derived in subsection \ref{subsec:The case of one-dimensional damping}) and we also use recycling for the parts that do not depend on time variable.

For the computation of the integral in line \ref{computeintpos1} 
we use the standard Simpson rule, hence we calculate the terms $h_{jk}(t_l,s_i)$, $i=1,\ldots,n_{s_1}$, $l=1,\ldots,n_t$.




For the 
integral in line \ref{computeintpos2} we start from the initial mesh of integration nodes (initially, we have $2^{b_{0}}$ nodes). Here, we are using the adaptive Simpson approach, which means that we pick iteratively denser meshes until we reach the prescribed tolerance $tol$. In this process, we recycle previously calculated function values $h_{jk}(t,s)$ to accelerate computations. Moreover, when the difference between the nodes reaches the maximal number of segment subdivisions (determined by the parameter $2^{b_{\max}}$), the current approximation on the segment is accepted.
We emphasize that in the steps \ref{computeisA}, \ref{computeintpos1} and \ref{computeintpos2} we benefit greatly from recycling. Recycling of the data from the offline part is made easier by the use of the adaptive approach, as we use the equidistant mesh with the number of nodes of the form  $2^l$.

\begin{remark}
The algorithms have been implemented in Julia (see \cite{julia}). Julia low level programming enables efficient implementation comparable with standard BLAS routines. In Julia  we were able to efficiently implement standard and adaptive Simpson quadratures, including nested loops with simple operations. Of course, efficiency strongly depends on the number of nodes $t_j$ and $s_i$ and on how much recycling is used. Therefore, in the offline  phase, we have prepared data needed for the calculation of the target value. Then we have used adaptive Simpson  approach for the calculation of the integral over the variable $s$. The main reason for that is that it allows an implementation of adaptive quadrature  that uses equidistant nodes, while on the same time we can recycle data obtained from previous steps as well as the data prepared in the offline part.
\end{remark}
\end{subsubsection}

\begin{subsubsection}{Algorithm parallelization}

The most demanding parts of our algorithm can be parallelized and therefore our approach can be additionally accelerated. Here we would like to emphasize where we have used Julia's multithreading environment. In particular, we have used the macro \verb|threads|. First, in the offline phase we have used the macro \verb|threads| in line \ref{first} and line \ref{Compute matrices GF} since this part includes generation of matrices with rows which are diagonal elements of matrices $F(s_i)$ and $G(s_i)$ for all nodes $s_i$ (that do not depend on the time nodes), while we also form a matrix that stores values $\cos(s_i t_l)$, for all nodes $s_i$, $i=1, \ldots,n_s$ and all nodes $t_l$, $l=1, \ldots,n_t$, which depends on the time nodes as well.

Furthermore, the main benefit from parallelization comes from acceleration of the online part. In particular, we have used the macro \verb|threads| in the inner loop for calculating the tensors $x_{\Im}$ and $x_{\Re}$. Also, the inner loop over $k$ depends on the dimension $n$, so lines \ref{computeisA} - \ref{computeintpot} have been calculated using the macro \verb|threads|.

We would like to emphasize the benefit from using Algorithm \ref{algorithm1} compared to the approach that calculates Lyapunov equation given by \eqref{directFormula}.
In the next section, we will calculate the number of  floating point operations needed for one evaluation of $p$–mixed $H_2$ norm for both approaches.

\end{subsubsection}


\begin{subsection}{Complexity analysis}

An alternative approach for the calculation of the finite time horizon $p$--mixed $H_2$ norm uses the formula \ref{directFormula} and we will call it  a Lyapunov based approach. This approach  requires calculation of the Lyapunov equation and calculation of the matrix exponential. This means that Lyapunov based approach needs $n_v\mathcal{O}(n^3)$ floating point operations for calculation of $p$–mixed $H_2$ norm for $n_v$ different viscosities.

This Lyapunov based approach can be accelerated by using model order reduction techniques as we mentioned in the introduction, but also it can be accelerated by using the low rank structure that appears in our case. In particular, the objective function can be calculated using an approach that is based on function calculation using low-rank updates; see, e.g., \cite{BeckermannCortinovisKressner21, BeckKressnerSch18}. This can be applied by calculating the matrix exponential function and by solving the Lyapunov equation given by \ref{directFormula}. Comparison with this approach is not given since we do not have relevant implementation of this approach. Moreover, we would like to emphasize that when the finite horizon $T$ is changed, the Lyapunov equation \ref{directFormula} does not need to be solved again, but the matrix exponential function needs to be calculated repeatedly. On the other hand, our approach includes formulae constructed in such a way so that the most expensive part (from the calculation point of view) can be recycled. Below we give more details on the complexity of our approach.

In the analysis of the complexity of Algorithm \ref{algorithm1}, we will separately study the online and the offline part. Moreover, there are two levels of offline parts. One includes certain complexity that does not depend even on the external damping and therefore can be done only once for all viscosities, damping positions and different time horizons. Main cost within this part comes from the simultaneous diagonalization defined by \eqref{MKdiag} that requires  $\mathcal{O}(n^3)$ (floating point) operations, but only once. Note that the simultaneous diagonalization is always necessary if the internal damping is defined in terms of the critical damping in \eqref{C_u}.
Then, once the damping positions are fixed, we can evaluate Algorithm \eqref{algorithm1} as follows
\begin{itemize}
\item Offline
  \subitem  preparation of offline data includes line \ref{smax} - line \ref{Compute vector fg} that requires: $n_s\mathcal{O}(n )$ operations
  \item Online, for each viscosity line \ref{computex} - line \ref{computeintpot} include
  \subitem line \ref{computex} that requires: $n_s\mathcal{O}(rn)$ operations
  \subitem for all $j$ and $k$ we have line \ref{computeisA} - line \ref{computeintpot} where
  \subsubitem  line \ref{computeisA} requires: $ \mathcal{O}(n_sn) $ operations
  \subsubitem  line \ref{computeintpos1} requires: $\mathcal{O}(n_t n_1)$ operations
  \subsubitem  line \ref{computeintpos2} requires: $\mathcal{O}(n_t n_2)$ operations
  \subsubitem  line \ref{computeintpot} requires: $\mathcal{O}(n_t)$ operations
\end{itemize}
This means that line \ref{computeisA} - line \ref{computeintpot} require  $ n_s\mathcal{O}(rn)+ n_tn_s\mathcal{O}(rn)+ n_s\mathcal{O}(rn^2)$ floating point operations.

Taking all into account, we can conclude that $p$–mixed $H_2$ norm for $n_v$ different viscosities can be calculated  by Algorithm \ref{algorithm1} using $n_vn_s\mathcal{O}(rn^2)$  floating point operations excluding the possible computational cost of the simultaneous diagonalization \eqref{MKdiag}, which is independent on the choice of all parameters except matrices $M$ and $K$. In particular, it does not depend on the parameters $T$, $p$, and the choice of the external damping matrix $C_{\mathrm{ext}}$ and so the cost can be taken as negligible in the framework of optimization or online simulation. Moreover, our estimation on the required number of floating point operations depends on $n_s$ which does not depend directly on $n$.

Here we would like to emphasize that in our approach when parameter $T$, which defines the time horizon, is changed we can use recycling on that level too. In particular, line \ref{computeisA} is the most demanding step in the algorithm but it does not depend on the time nodes. Moreover, when $T$ is slightly changed we need to evaluate Algorithm \ref{algorithm1} for very small $n_t$ while all the data that do not depend on time nodes can be recycled. From the complexity perspective this means that the floating point operations from line \ref{computex} and line \ref{computeisA} now belong to the offline part. Taking all this into account we obtain that calculation of $p$–mixed $H_2$ norm for $n_T$ different time horizons can be calculated  by Algorithm \ref{algorithm1} using
\begin{displaymath}n_s\mathcal{O}(rn)+   n_s\mathcal{O}(rn^2)+ n_T n_tn_s\mathcal{O}(rn) \end{displaymath}
floating point operations. In contrast, Lyapunov based approach needs $n_T\mathcal{O}(n^3)$ floating point operations for the calculation of $p$–mixed $H_2$ norm for $n_T$ different time horizons.

This analysis  shows that we can efficiently analyze the influence of different final times $T$ on the $p$--mixed $H_2$ norm. Therefore, one approach for practical determination of a good final time $T$ could be based on the determination of $T$ for which the $p$--mixed $H_2$ norm stagnates. In the next section, we will illustrate this in an numerical example.

\end{subsection}


\end{section}

\begin{section}{Numerical experiments}
\label{sec: Numerical experiments}
 
In this section, we present numerical examples in order to illustrate the behaviour of the $p$-mixed $H_2$ norm for different choices of the finite time horizon $T$ and advantages of our approach compared with the Lyapunov based approach. Computations have been carried out on a workstation with 64-bit Linux operating system and with an AMD\textregistered Ryzen Threadripper\texttrademark processor with 64 CPUs, 128 threads and 256 GB DDR4 RAM. Moreover, for the sake of time comparison on a standard computer, in Example \ref{example2}, computations have also been tested on a laptop with Intel\textregistered Core\texttrademark  i7-9750H processor with 6 CPUs, 12 threads, 8 GB RAM and  64-bit version of Windows. Numerical experiments are performed using Julia \cite{julia}, on the workstation we have used  Version 1.6.3  with 32 threads, while on the laptop we have used Version 1.6.0 with 6 threads.

\begin{example} \label{example1}
We consider an $n$-mass oscillator or oscillator ladder with one damp\-er, shown in Figure \ref{n-massOscillator}, which
describes the mechanical system of $n$ masses and $n+1$ springs. Similar models were considered e.g. in the papers \cite{BennerTomljTruh11}, \cite{NakTomTr19}, \cite{TrTomljPuv18} and the book \cite{VES2011}.

 In this example, we are interested in analyzing the external damping that significantly influences the system. To this end, we are considering effective viscosity, i.e.\ the threshold value after which the finite time horizon $p$--mixed $H_2$ norm drops significantly. We noticed through numerical experiments that such a value exists for all systems we have considered. Since in this example we take small $n$, we use the Lyapunov based approach.

\begin{figure}[htbp]
\centering
\includegraphics[width=10cm]{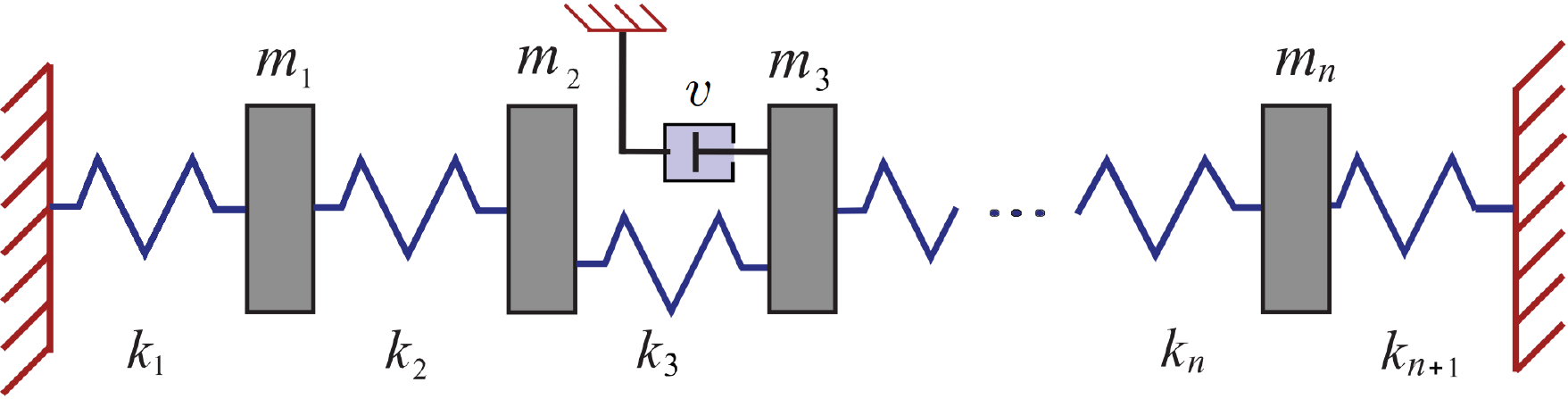}
 \caption{The $n$-mass oscillator with one grounded damper}
 \label{n-massOscillator}
\end{figure}
For such a mechanical system the mathematical model is given by
\eqref{MDK}, where the mass and stiffness matrices are
\begin{align*}
M&=\diag(m_1,m_2,\ldots,m_n),\\
K&=\left(%
\begin{array}{ccccc}
  k_1+k_2 & -k_2 &  &  &  \\
  -k_2    & k_2+k_3& -k_3 &  &  \\
          & \ddots   & \ddots & \ddots &  \\
          &      & -k_{n-1} & k_{n-1}+k_{n} & -k_n   \\
          &      &  & -k_n & k_n+k_{n+1} \\
\end{array}
\right).
\end{align*}

We choose matrices  $B_2, E_1$ and $  E_2$ such that \eqref{matrices EZ} holds. Such matrices can be determined directly from our system matrices and more details can be found in \cite[Section 4]{NakTomTr19}.

We will consider the following configuration
\begin{align*}
 n &= 200; \quad k_j = \frac{n}{2}, \quad \forall j; \quad
   m_j   =  \begin{cases} \frac{n - 2 j}{10}, \quad & j=1,\,\ldots,\frac{n}{4}, \\
      \frac{\frac{n}{4}+j}{10} , \quad & j=\frac{n}{4}+1,\,\ldots,\,n.
                 \end{cases}
\end{align*}

We will consider one damper, but the damping position will be changed, so the
external damping is defined by \eqref{Cext} and  the internal damping $C_{\mathrm{int}}$ is
defined as in \eqref{C_u}, with $\alpha=0.005$. In this example for the illustration and comparison purposes we will consider four different damping positions, that is, we will consider that vector $e$ from \eqref{Cext}, that encodes damping positions, is equal to $e_i$ where  $i=10, 80,110, 160$.

We consider  damping of $1\,\%$ smallest eigenfrequencies of the system which means that we have $r=\frac{\displaystyle n}{\displaystyle 100}=2$ and parameter $p$ that defines $p$--mixed $H_2$ norm is taken to be $0.5$.

In this example we will show the influence of the parameter $T$ in \eqref{objectiveFun}, that is the influence of the time horizon in the $p$--mixed $H_2$ norm.  This influence is shown on \eqref{optviscfigure}.

\begin{figure}[htbp]
\centering
\includegraphics[width=12cm]{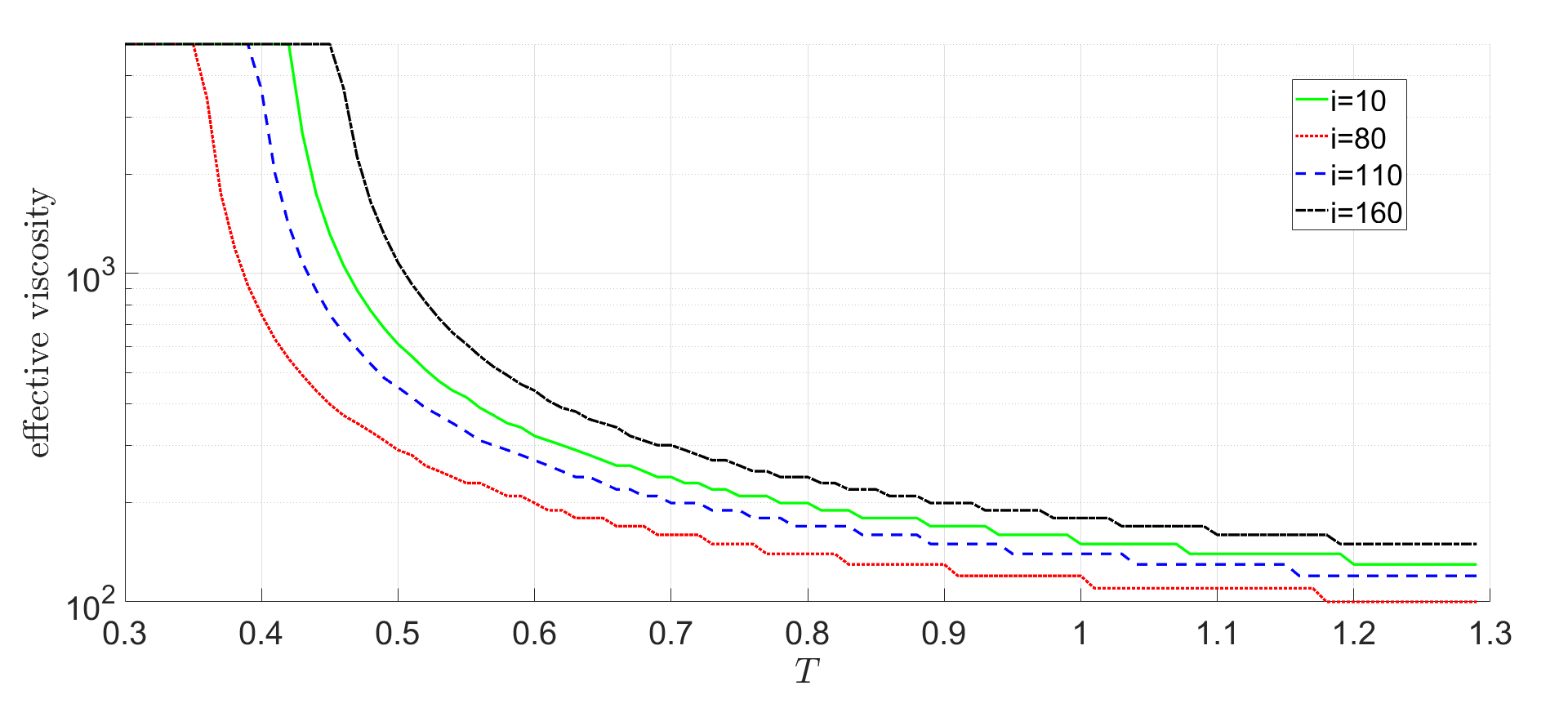}
  \caption{(Example \ref{example1}) The influence of the integration time on the magnitude of the effective viscosity for dimension $n=200$ and for four different damping positions. By effective viscosity we mean the threshold value after which the finite time horizon $p$--mixed $H_2$ norm drops significantly. The computation is done using the workstation.}
 \label{optviscfigure}
\end{figure}

As is to be expected, for very small times $T$ it is hard to achieve significant damping effects, therefore a very large viscosity is needed to significantly reduce the finite time horizon $p$--mixed $H_2$ norm, which is usually physically infeasible. On the other hand, when $T$ is increased we observe major decay in effective viscosities for all considered damping positions. This means that for moderate times $T$ effective viscosities vary within the appropriate values, and for large times $T$ the curve is close to the case $T=\infty$. Of course, it is hard to state what do we mean by moderate $T$, but exactly this decay gives us this information. In this example, from our analysis we can observe that relevant time horizon with  reasonable effective viscosity  starts at around $1$.
\end{example}

\begin{example}\label{example2}
In this example we will have the same configuration as in the previous example, but with the dimension $n=2000$. We  have calculated an approximation of the finite time horizon $p$--mixed $H_2$ norm by using Algorithm \ref{algorithm1} with the following initial requirements:
\begin{align*}
tol&=10^{-5},  &n_t&=20,\\
 n_v&=20,  & n_1&=599,\\
  S_1&=\omega_n/25,  & b_0&=8,\\
  b_{\max}&=12,  & tol_s&=0.05,
\end{align*}
and we consider the following viscosities  $v_1  =75 ,  v_2=150, \dots ,v_{20}=1500$.

As in the previous example, we have used $p = 0.5$ and we would like to damp $1\,\%$ of the undamped eigenfrequencies, which means that $r=20$, while internal damping is determined using $\alpha=0.005$.
Here, from the similar numerical analysis as in the previous example, we can conclude that $T$ should be larger than $1$, and therefore in the continuation of this example  we illustrate the efficiency of our approach for $T=2$. Moreover, the influence of $T$ is significant and with our approach we can efficiently calculate the finite time horizon $p$--mixed $H_2$ norm for several values of $T$, since the major computational cost taken care of in the offline part.

To present comparison with a Lyapunov based approach, we will calculate the finite time horizon $p$--mixed $H_2$ norm with new approach and compare it with the Lyapunov based approach that uses the formula \eqref{directFormula}.
 Here Lyapunov based approach is implemented in such a way that we solve Lyapunov equation and matrix exponential directly. In particular, in Julia the matrix exponential is calculated  using one of the most widely used method based on \cite{Higham05}, while the algorithm for solving the Lyapunov equation is based on LAPACK routines that uses direct solvers for Lyapunov equation. For  more details see, e.g. \cite{DATTA04,ANT05,moler2003ndw,Golub2013,lapack99}. Therefore we can use this value as the exact solution.
  Figure \ref{meanerr} presents the average relative error for these two approaches for $20$ equidistant viscosities, from $75$ to $1500$, and shows the average relative error for all considered damping positions $e_i$, where  $i=100, 200, \ldots, 1900$.
Then, for four different damping positions, that is for $e=e_i$, where $i=200,800,1100,1600$, Figure \ref{relerr} shows the relative error for these two approaches for all $20$ equidistant viscosities.  We can see that our approach for given tolerances results with satisfactory accuracy.

\begin{figure}[htbp]
\centering
\includegraphics[width=12cm]{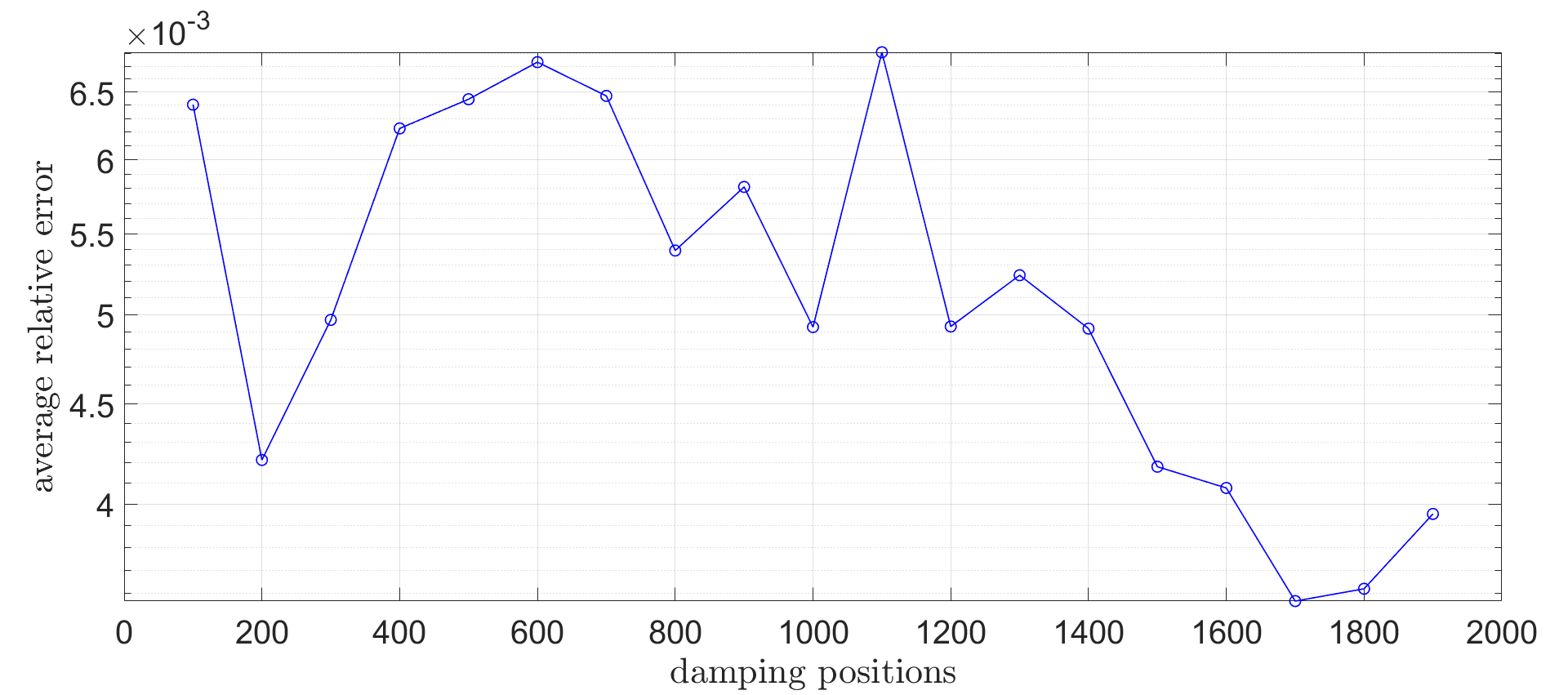}
\caption{(Example \ref{example2}) The average relative error for $20$ equidistant viscosities from $75$ to $1500$, for the calculation of the finite time horizon $p$--mixed $H_2$ norm using Algorithm \ref{algorithm1}, compared with the Lyapunov based approach, for 19 different damping positions. The computation is done using the workstation.}
\label{meanerr}
\end{figure}

\begin{figure}[htbp]
\centering
\includegraphics[width=12cm]{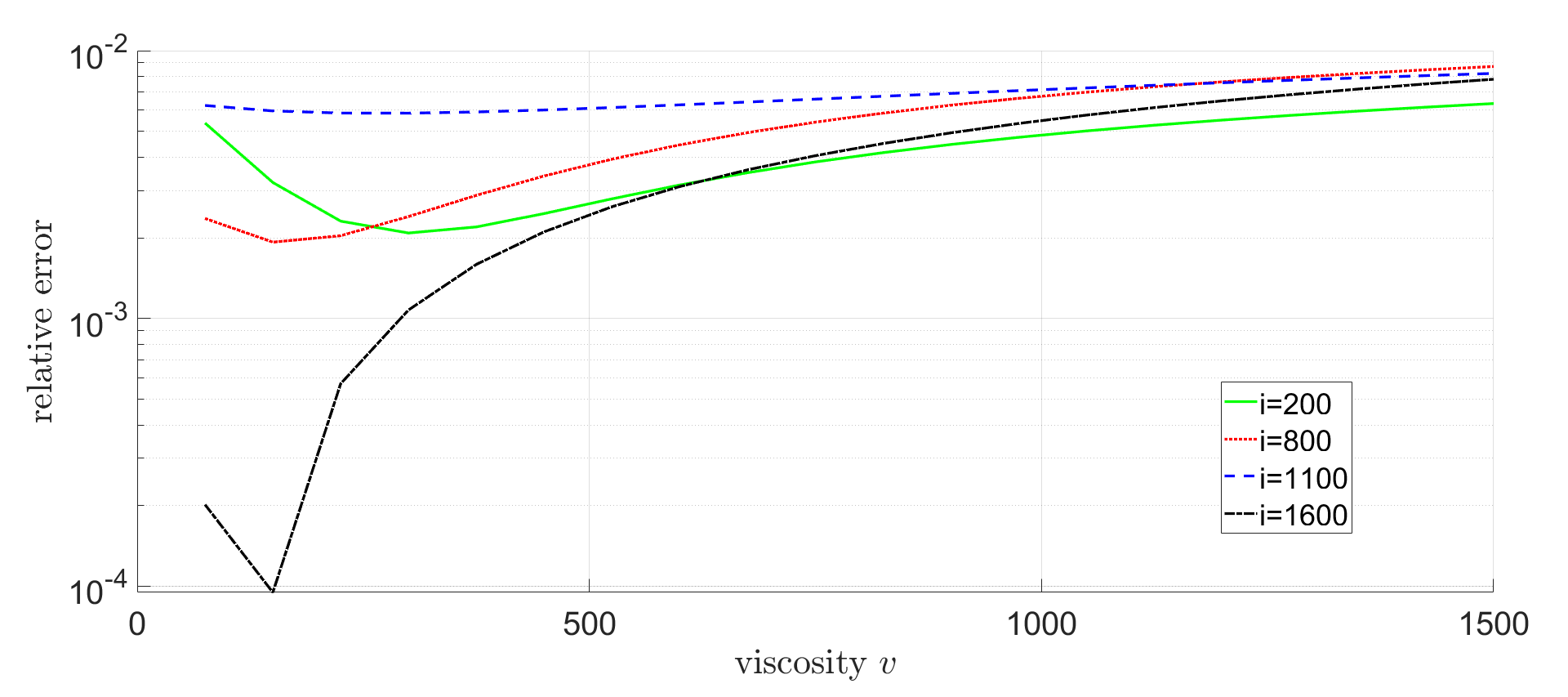}
\caption{(Example \ref{example2}) The  relative error for the calculation of the finite time horizon $p$--mixed $H_2$ norm using Algorithm \ref{algorithm1}, compared with the Lyapunov based approach, for four different damping positions and $20$ equidistant viscosities from $75$ to $1500$. The computation is done using the workstation.}
\label{relerr}
\end{figure}

For a time comparison, first we have used a laptop to calculate the finite time horizon $p$--mixed $H_2$ norm, using Algorithm \ref{algorithm1} and using Lyapunov based approach. For four different damping positions given in Figure \ref{relerr}, average acceleration factor is $2.5$.

\begin{figure}[htbp]
\centering
\includegraphics[width=12cm]{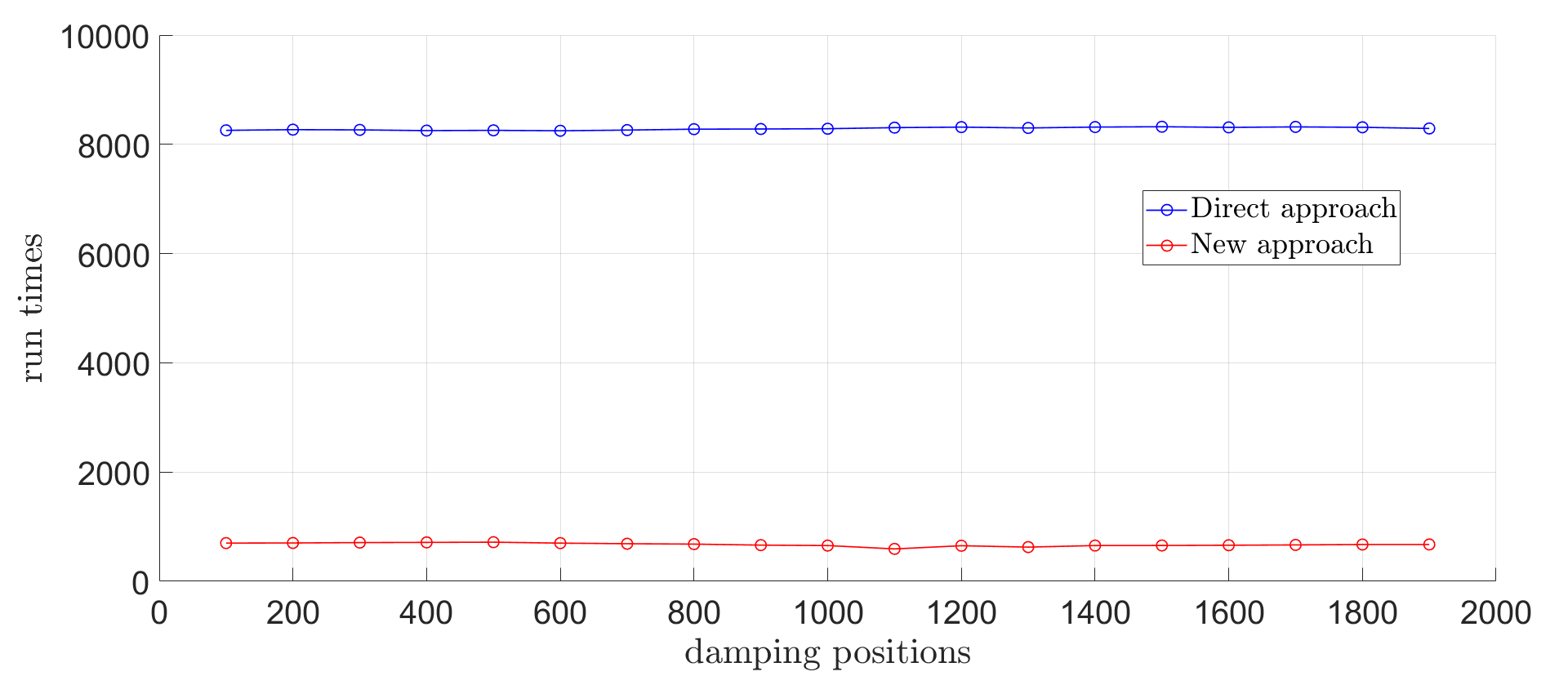}
\caption{(Example \ref{example2}) The time required for the calculation of the finite time horizon $p$--mixed $H_2$ norm using Algorithm \ref{algorithm1} compared to the time required for the Lyapunov based approach, for 19 different damping positions. The computation is done using the workstation.}
\label{accfact}
\end{figure}

This can be improved by the efficient usage of tensor structures that arise in Algorithm \ref{algorithm1} which is illustrated in Figure \ref{accfact}. In particular, Figure \ref{accfact} presents the time required for the calculation of the finite time horizon $p$--mixed $H_2$ norm using Algorithm \ref{algorithm1} and the time required for the Lyapunov based approach.

Current implementation uses the benefit of large number of threads available on the workstation. Thus, it is optimized for better usage of multithreading environment, and this is also confirmed in numerical tests. In particular, on the workstation  an average acceleration factor for four considered damping positions presented on Figure \ref{relerr} is $12.2$.

\end{example}

\begin{example} \label{example3}
In this example, we consider the mechanical system shown in Figure \ref{titrajni_sistem3rows}, consisting of three rows of $d$ masses and $d+1$ springs which are, on the left-hand side, connected to the fixed base. Springs in each row have the same stiffness equal to $k_1$, $k_2$ and $k_3$. On the right-hand side, they are connected to one additional mass, which is connected to the fixed base with a spring of stiffness $k_4$.
Similar models were considered in the paper \cite{BennerTomljTruh10}.

\begin{figure}[htbp]
\centering
\includegraphics[width=8cm]{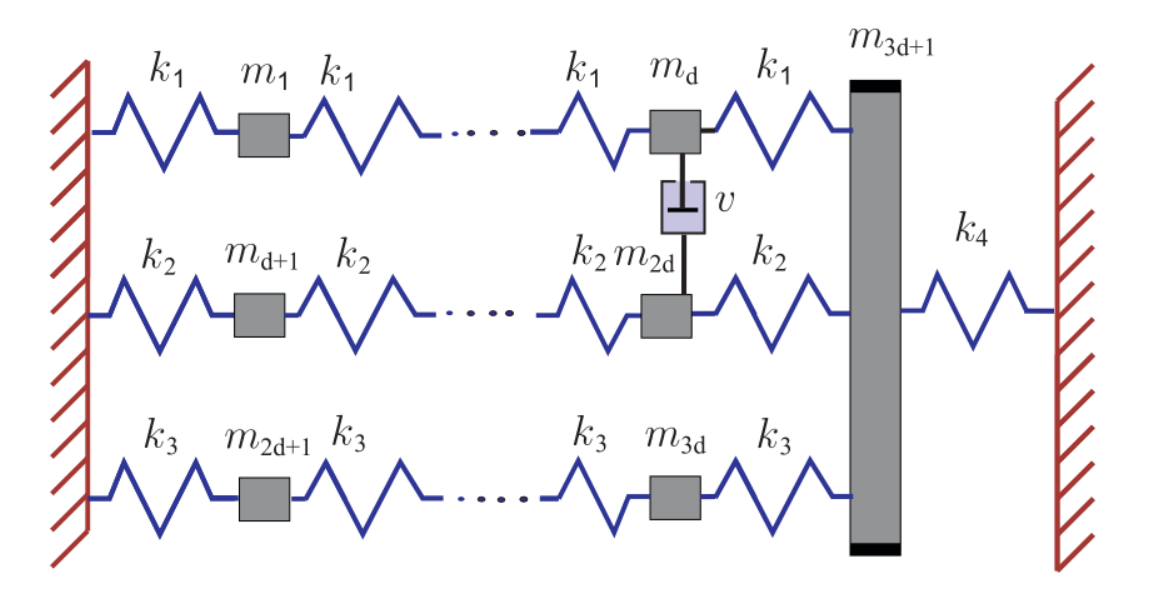}
 \caption{$(3d+1)$-mass oscillator with one damper}
 \label{titrajni_sistem3rows}
\end{figure}
For such a mechanical system the mathematical model is given by
\eqref{MDK}, where the mass and stiffness matrices are
\begin{align*}
M&=\diag(m_1,m_2,\ldots,m_n),\\
K&=\left(%
\begin{array}{ccccc}
   K_1 &  &  & -\kappa_1\\
    & K_2 &  & -\kappa_2\\
    &  & K_3 & -\kappa_3\\
   -\kappa_1 & -\kappa_2 & -\kappa_3 & k_1+k_2+k_3+k_4
\end{array}
\right),\quad \mbox{with}\\
K_i&=k_i \left(%
\begin{array}{ccccc}
  2 & -1 &  &  &  \\
  -1    & 2 & -1 &  &  \\
          & \ddots   & \ddots & \ddots &  \\
          &      & -1 & 2 & -1   \\
          &      &  & -1 & 2 \\
\end{array}
\right),
\kappa_i =\left(%
\begin{array}{ccccc}
  0  \\
  0  \\
  \vdots  \\
  0   \\
  k_i \\
\end{array}\right), i=1,2,3.
\end{align*}
We have the following configuration
\begin{align*}
 d = 800; \quad k_j=\begin{cases} 800,\quad j=1,\\ 600,\quad j=2,\\ 700,\quad j=3;
 \end{cases}
   m_j   =  \begin{cases} 1000-2j, \quad & j=1,\,\ldots,\frac{d}{2}, \\
      j-200, \quad & j=\frac{d}{2}+1,\,\ldots,\,d,\\
      j+100, \quad & j=d+1,\,\ldots,\,2d,\\
      n-2j, \quad & j=2d+1,\,\ldots,\,3d,\\
      2000, \quad & j=n=3d+1.
    \end{cases}
\end{align*}
We will consider here one damper with a different damping geometry compared to the previous example. Damper will be located between two masses in  different rows and we will consider six different damping positions. The corresponding vectors $e$ from \eqref{Cext}, denoted by $e^i$ are given by \begin{align}
    (e^i)_j= \begin{cases} 1,\quad &j=i\\
    -1, \quad & j=i+d\\
    0, \quad & \text{otherwise}
\end{cases}, \quad j=1,\ldots ,n, \label{dampposThrdEx}
\end{align} for  $i=20, 320,620, 920,1220,1520$.
The internal damping $C_{\mathrm{int}}$ is
defined as in \eqref{C_u}, with $\alpha=0.005$.

As in previous examples, we consider  damping of approximately $1\,\%$ smallest eigenfrequencies of the system, that is $r=24$, and parameter $p$ that defines $p$--mixed $H_2$ norm is taken to be $0.5$. We  have calculated an approximation of the finite time horizon $p$--mixed $H_2$ norm by using Algorithm \ref{algorithm1} with the following initial requirements:
\begin{align*}
tol&=10^{-7},  &n_t&=20,\\
 n_v&=20,  & n_1&=799,\\
  S_1&=\omega_n/15,  & b_0&=10,\\
  b_{\max}&=14,  & tol_s&=0.05,
\end{align*}
and we consider the following viscosities:  $v_1  =10 ,  v_2=210, v_3=410, \dots ,v_{20}=3810$.
We illustrate the efficiency of our approach for $T=15$.

To present comparison with a Lyapunov based approach, as in the previous example, we will calculate the finite time horizon $p$--mixed $H_2$ norm with the new approach and compare it with the Lyapunov based approach based on the formula \eqref{directFormula}.
For all six damping positions defined by \eqref{dampposThrdEx}, Figure \ref{ex32} shows the relative error for these two approaches for all $20$ viscosities, from $10$ to $3810$.  We can see that our approach for given tolerances results with satisfactory accuracy.


\begin{figure}[htbp]
\centering
\includegraphics[width=12cm]{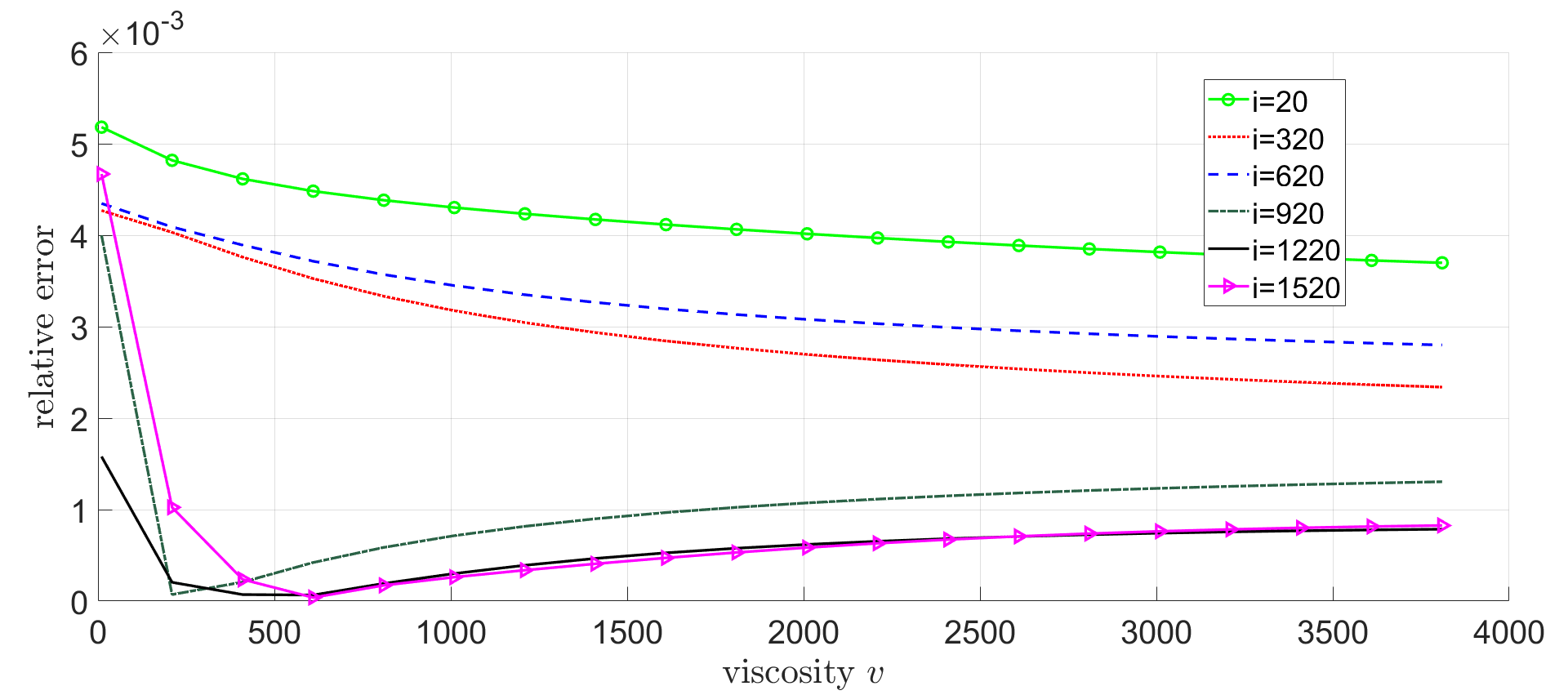}
\caption{(Example \ref{example3}) The  relative error for the calculation of the finite time horizon $p$--mixed $H_2$ norm using Algorithm \ref{algorithm1}, compared with the Lyapunov based approach, for six different damping positions and $20$ equidistant viscosities between $10$ and $3810$. The computation is done using the workstation.}
\label{ex32}
\end{figure}

For a time comparison, Figure \ref{ex33} presents the time required for calculating the finite time horizon $p$--mixed $H_2$ norm using Algorithm \ref{algorithm1} and the time required for the Lyapunov based approach.

\begin{figure}[htbp]
\centering
\includegraphics[width=12cm]{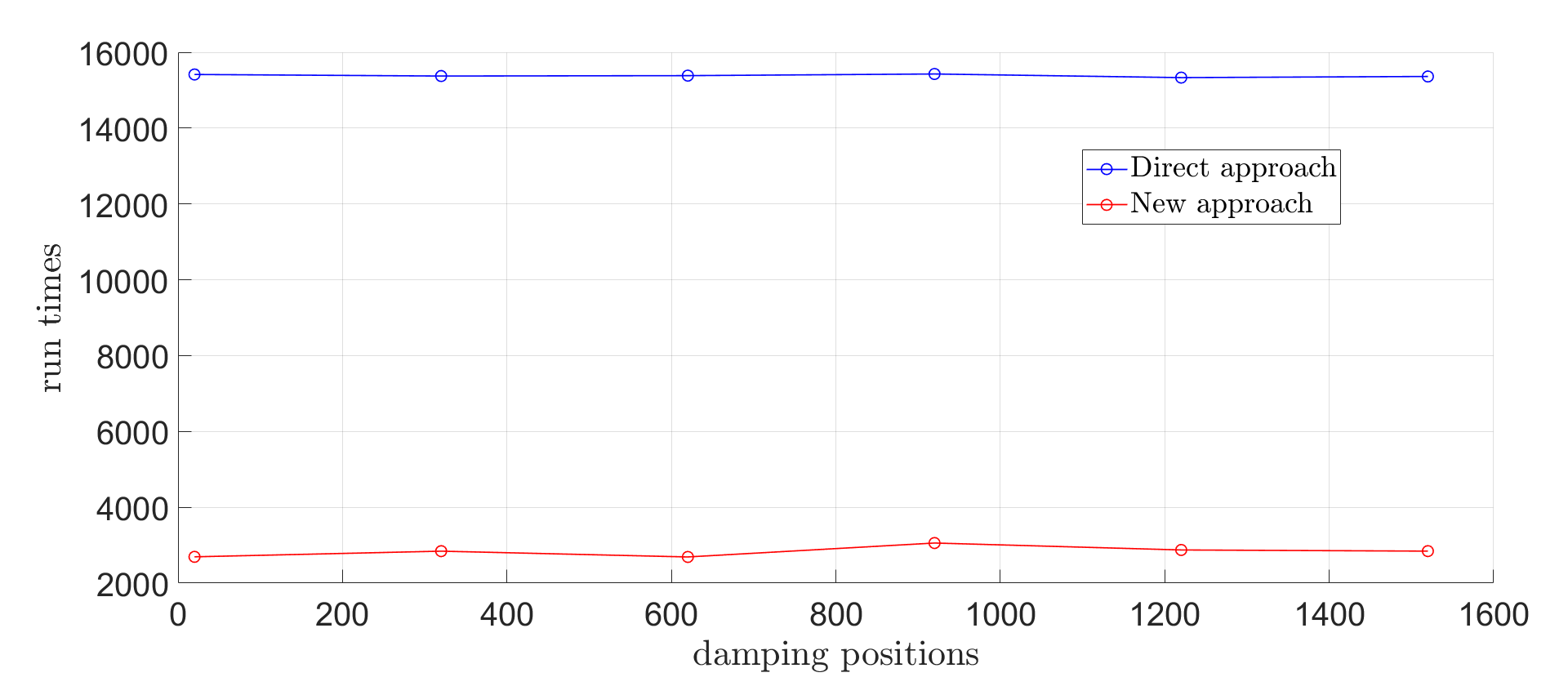}
\caption{(Example \ref{example3}) The time required for calculating the finite time horizon $p$--mixed $H_2$ norm using Algorithm \ref{algorithm1} compared to the time required for the Lyapunov based approach, for six different damping positions. The computation is done using the workstation.}
\label{ex33}
\end{figure}

Here we obtain that an average acceleration factor for six considered damping positions presented on Figure \ref{ex32} is $5.4$.


\end{example}

A limitation of the proposed method is that with this method, one can not efficiently reach very high accuracies. More accurate approximations may be significantly costlier, which makes this method not feasible for specific applications where high accuracy is needed. However, regarding typical vibrational systems, it is unrealistic to expect that the viscosity needs to be calculated with an accuracy greater than $10^{-3}$, as typical damping devices can not be that precisely calibrated. Thus, with the proposed method, we can obtain satisfactory accuracy for damping purposes with a significant time speed-up with respect to the standard approaches.

\end{section}

\bibliographystyle{plain}

\begin{section}{Conclusions}
We considered a control problem for damped vibrational systems where the performance measure of the system is chosen to be $p$-mixed $H_2$ norm over the finite time horizon. The algorithm presented in the paper offers an efficient calculation of this norm in the case when the system is dependent on one parameter and the number of inputs or outputs of the system is significantly smaller than the order of the system. This approach can be extended to the case of the multi-parameter setting as well as for other parameter-dependent systems where an efficient calculation of finite time horizon $H_2$ norm is needed. In future work we will extend this approach also to the optimal control problems for damped vibrational systems and also
other problems in control theory.
\end{section}

\appendix
\begin{section}{Proofs of Propositions \ref{systems}, \ref{prop:solutionxj} and \ref{prop:solutionHjk}}
\label{Appendix}

\begin{proof}[Proof of Proposition  \ref{systems}]
For matrix $L(S)$, we have
\begin{align*}
    L(s) &= \diag (l_1(s),\ldots, l_n(s)) \\
    &= \diag \left( \frac{1}{\omega_1^2 - s^2 + \mathrm{i} s \nu \omega_1 }, \ldots, \frac{1}{\omega_n^2 - s^2 + \mathrm{i} s \nu \omega_n }\right) = F(s) - \mathrm{i} G(s).
\end{align*}
For $ 1\le j \le r$  we have
\begin{equation*}
	\begin{bmatrix}
	I + s D G(s) & - sD F(s) \\
	sD F(s) & I + s D G(s)
	\end{bmatrix}
	\begin{bmatrix}
	x_j^{\Re} \\ x_j^{\Im}
	\end{bmatrix}
	=   \begin{bmatrix}
	  D \Omega F(s) e_j \\ - D \Omega G(s) e_j
	\end{bmatrix}.
\end{equation*}
For $ n+1\le j \le n+r$  we have
\begin{equation*}
	\begin{bmatrix}
	I + s D G(s) & - sD F(s) \\
	sD F(s) & I + s D G(s)
	\end{bmatrix}
	\begin{bmatrix}
	x_j^{\Re} \\ x_j^{\Im}
	\end{bmatrix}
	=  \begin{bmatrix}
	 e_{j-n} \\ 0
	\end{bmatrix}.
\end{equation*}
It is easy to see from the construction of $G(s)$ and the assumption on $D$ that the matrix $I + s D G(s)$ is always non-singular. If we multiply this systems from the left by the matrix
\begin{displaymath}
\begin{bmatrix}
\left( I + s D G(s) \right)^{-1} & 0 \\ -  sD F(s) \left( I + s D G(s) \right)^{-1} & I
\end{bmatrix},
\end{displaymath}
we obtain the stated result.
\end{proof}

\begin{proof}[Proof of Proposition \ref{prop:solutionxj}]
First, by using the Sherman–Morrison–Woodbury formula (see, e.g., \cite{Golub2013}) we obtain
\begin{displaymath}
\left( I + s D G(s) \right)^{-1} = I - s U \left( \frac{1}{\gamma}+ s U^\top G(s) U \right)^{-1} U^\top G(s) .
\end{displaymath}
Note that $U^\top G(s)U\in\mathbb{R}$
so the inverse on the right hand side is trivial to solve,
\begin{displaymath}
\left( I + s D G(s) \right)^{-1} = I - \frac{s\gamma}{1+s\gamma g}UU^\top G(s).
\end{displaymath}
With this, \eqref{eq:sustav1} and \eqref{eq:sustav2} become
\begin{multline*}
    \begin{bmatrix}
	I & -\frac{\displaystyle s\gamma}{\displaystyle 1+s\gamma g(s)}U U^\top F(s)
	\\ 0 & s^2 \gamma^2 U U^\top F(s)(I-\frac{\displaystyle s\gamma}{\displaystyle 1+s\gamma g(s)}U U^\top G(s)) U U^\top F(s) + I +s\gamma U U^\top G(s)
	\end{bmatrix}
	\begin{bmatrix}
	x_j^{\Re} \\ x_j^{\Im}
	\end{bmatrix}
	\\= \begin{bmatrix}
	 \frac{\displaystyle \gamma}{\displaystyle 1+s\gamma g(s)}U U^\top \Omega F(s) e_j \\
	- (\frac{\displaystyle s\gamma^2 f(s)}{\displaystyle 1+s\gamma g(s)}U U^\top \Omega F(s)  + \gamma U U^\top \Omega G(s) ) e_j
	\end{bmatrix},\\
\end{multline*}
\begin{multline*}
	\begin{bmatrix}
	I & -\frac{\displaystyle s\gamma}{\displaystyle 1+s\gamma g(s)}U U^\top F(s) \\
	0 & s^2 \gamma^2 U U^\top F(s)(I-\frac{\displaystyle s\gamma}{\displaystyle 1+s\gamma g(s)}U U^\top G(s))U U^\top F(s) + I +s\gamma U U^\top G(s)
	\end{bmatrix}
	\begin{bmatrix}
	x_j^{\Re} \\ x_j^{\Im}
	\end{bmatrix}
	\\= \begin{bmatrix}
	(I - \frac{\displaystyle s\gamma}{\displaystyle 1+s\gamma g(s)}U U^\top G(s)) e_{j-n} \\
	-(s \gamma U U^\top F(s) + s^2\gamma^2 f(s) U U^\top G(s) ) e_{j-n}
	\end{bmatrix}.
\end{multline*}
Note that since we know that \eqref{eq:sustav1} and \eqref{eq:sustav2} have unique solutions, it follows that the $(2,2)$ entry in the matrix on the left hand side is non-singular.
As the $(2,2)$ entry in the matrix on the left hand side is

\begin{align*}
& s^2 \gamma^2 U U^\top F(s) \left(I-\frac{\displaystyle s\gamma}{\displaystyle 1+s\gamma g(s)}U U^\top G(s)\right)U U^\top F(s) + I +s\gamma U U^\top G(s)  \\
&= I + s \gamma U \left(U^\top G(s) + s \gamma U^\top F(s) \left(I-\frac{s \gamma}{1 + s \gamma g(s)}U U^\top G(s)\right) U U^\top F(s) \right)\\
&= I + s \gamma U \left(U^\top G(s) + \frac{s \gamma f(s)}{1 + s \gamma g(s)} U^\top F(s) \right) ,
\end{align*}
we have

\begin{align*}
&\left(I + s \gamma U \left(U^\top G(s) + \frac{s \gamma f(s)}{1 + s \gamma g(s)} U^\top  F(s)\right) \right)^{-1} \\
&= I- s U \left(\frac{1}{\gamma} + s \left(U^\top  G(s) + \frac{s\gamma f(s)}{1 + s \gamma g(s)} U^\top  F(s)\right)U \right)^{-1} \times\\
& \quad \times \left(U^\top  G(s) + \frac{s\gamma f(s)}{1 + s \gamma g(s)} U^\top  F(s)\right)   \\
&= I - s U \left(\frac{1}{\gamma} + s g(s) + \frac{s^2 \gamma f(s)^2}{1 + s \gamma g(s)}\right)^{-1} \left(U^\top  G(s) + \frac{s \gamma f(s)}{1 + s \gamma g(s)} U^\top  F(s)\right)\\
&= I - \frac{s \gamma}{(1 + s \gamma g(s))^2+(s \gamma f(s))^2} \left((1 + s \gamma g(s)) U U^\top  G(s)
+ s \gamma f(s)U U^\top  F(s) \right).
\end{align*}
Now, the formulae \eqref{xj1}-\eqref{xr2} follow by direct calculation.
\end{proof}

\begin{proof}[Proof of Proposition \ref{prop:solutionHjk}]
First, for $ 1\le k \le n$  we obtain
\begin{displaymath}
(\mathrm{i}s - A_0^\top)^{-1} e_k= \begin{bmatrix}
(\nu \omega_k + \mathrm{i}s)l_k(s) e_k \\ \omega_k l_k(s) e_k
\end{bmatrix},
\end{displaymath}
and for $n+1\le k\le 2n $ we obtain
\begin{displaymath}
(\mathrm{i}s - A_0^\top)^{-1} e_k= \begin{bmatrix}
- \omega_{k-n} l_{k-n}(s) e_{k-n} \\ \mathrm{i}s  l_{k-n}(s) e_{k-n}
\end{bmatrix},
\end{displaymath}
where $l_k(s)=f_k(s) - \mathrm{i}g_k(s)$, for $k=1,\ldots, n.$\\
For $1\le j \le r$ and $1\le k\le n$ we obtain
\begin{align*}
e_j^\top (\mathrm{i}s - A^\top)^{-1}e_k &= (\nu\omega_k  + \mathrm{i}s) l_k(s) e_j^\top  e_k + \omega_k l_k(s) \left( x_j^2 \right)^\top  e_k \\
&= (\nu\omega_k  + \mathrm{i}s) (f_k(s) - \mathrm{i}g_k(s) ) e_j^\top  e_k + \omega_k (f_k(s) - \mathrm{i}g_k(s) ) \left( x_j^2 \right)^\top  e_k\\
&= \left(\nu \omega_k f_k(s) + s g_k(s)\right)\delta_{j,k} + \omega_k \left( f_k(s) (x_j^{\Re})_k +  g_k(s) (x_j^{\Im})_k\right) \\
&\quad + \mathrm{i} \left( (s f_k(s) - \nu \omega_k g_k(s))\delta_{j,k} + \omega_k \left(f_k(s) (x_j^{\Im})_k -  g_k(s) (x_j^{\Re})_k \right)\right) ,
\end{align*}
for $n+1\le j \le n+r$ and $1\le k\le n$ we obtain
\begin{align*}
e_j^\top (\mathrm{i}s - A^\top)^{-1}e_k &=  \omega_k l_k(s) \left( x_j^2 \right)^\top  e_k =  \omega_k (f_k(s) - \mathrm{i}g_k(s) ) \left( x_j^2 \right)^\top  e_k \\
&=  \omega_k \left( f_k(s) (x_j^{\Re})_k + g_k(s)(x_j^{\Im})_k \right) + \mathrm{i} \omega_k \left( f_k(s) (x_j^{\Im})_k - g_k(s) (x_j^{\Re})_k \right) ,
\end{align*}
for $1\le j \le r$ and $n+1\le k\le 2n$ we obtain
\begin{align*}
& e_j^\top (\mathrm{i}s - A^\top)^{-1}e_k = - \omega_{k-n} l_{k-n}(s) e_j^\top  e_{k-n} + \mathrm{i}s l_{k-n}(s) \left( x_j^2 \right)^\top  e_{k-n}\\
 &= - \omega_{k-n} (f_{k-n}(s) - \mathrm{i}g_{k-n}(s) ) e_j^\top  e_{k-n} + \mathrm{i}s (f_{k-n}(s) - \mathrm{i}g_{k-n}(s) ) \left( x_j^2 \right)^\top  e_{k-n}\\
 &= - \omega_{k-n} f_{k-n}(s) \delta_{j,k-n} + s g_{k-n} (s) (x_j^{\Re})_{k-n} - s f_{k-n} (s) (x_j^{\Im})_{k-n} \\
 &\quad + \mathrm{i} \left(  \omega_{k-n} g_{k-n} (s) \delta_{j,k-n}  + s f_{k-n}(s)(x_j^{\Re})_{k-n} + s g_{k-n}(s) (x_j^{\Im})_{k-n}   \right)   ,
\end{align*}
and for $n+1\le j \le n+r$ and $n+1\le k\le 2n$ we obtain
\begin{align*}
e_j^\top (\mathrm{i}s - A^\top)^{-1}e_k &= \mathrm{i}s l_{k-n}(s) \left( x_j^2 \right)^\top  e_{k-n} = \mathrm{i}s (f_{k-n}(s) - \mathrm{i}g_{k-n}(s) ) \left( x_j^2 \right)^\top  e_{k-n} \\
&=  s \left( g_{k-n}(s) (x_j^{\Re})_{k-n}  - f_{k-n} (s) (x_j^{\Im})_{k-n} \right)\\
& \quad + \mathrm{i}  s \left(  f_{k-n}(s) (x_j^{\Re})_{k-n}  + g_{k-n}(s) (x_j^{\Im})_{k-n}  \right) .
\end{align*}
Now we take into account that the inner integral has a real value (see, e.g., \cite{Cohen07}), so we just need to calculate its real part. Thus,
 the function  $h_{jk}$ defined by \eqref{h_{jk}(t,s)} satisfies
\begin{displaymath} h_{jk}(t,s) = 2 \cos{st} \cdot \Re (e_j^\top (\mathrm{i}s-A^\top)^{-1} e_k).\end{displaymath}
The  result now follows directly by taking real parts in the formulae given above.
\end{proof}
\end{section}

\section*{Acknowledgements}
The authors express their gratitude to Zlatko Drma\v{c} for very detailed comments on an earlier version of the manuscript. The authors thank the anonymous referees for their very careful reading of the manuscript and helpful comments.

\end{document}